\documentclass{amsart}

\usepackage[mathscr]{euscript}
\usepackage{amssymb}
\usepackage{amsmath}
\usepackage{latexsym}
\usepackage{color}

\makeatletter
\renewcommand\@biblabel[1]{#1.}
\makeatother
\newtheorem{thm}{Theorem}
\newtheorem{lem}{Lemma}
\newtheorem{prop}{Proposition}

\newtheorem{cor}{Corollary}
\theoremstyle{definition}
\newtheorem{problem}{Problem}

\newtheorem{df}{Definition}
\newtheorem{remark}{Remark}
\theoremstyle{definition}
\newtheorem{ex}{Example}

\newcommand\mf{\mathfrak }

\newcommand{\Fin}{\textrm{Fin}} 
\DeclareMathOperator\supp{supp}
\DeclareMathOperator\card{card}
\DeclareMathOperator\Exh{Exh}
\DeclareMathOperator\iden{id}
\newcommand{\id}{\iden_\omega}
\newcommand{\ce}{\mf c}
\newcommand{\eps}{\varepsilon}
\newcommand{\I}{\mathcal I}
\newcommand{\J}{\mathcal J}
\newcommand{\Z}{\mathcal Z}
\newcommand{\EU}{\mathcal{EU}}

\title[Erd\H{o}s-Ulam ideals vs. simple density ideals]{Erd\H{o}s-Ulam ideals vs. simple density ideals}

\author{Adam Kwela}
\thanks{The author has been supported by the grant BW-538-5100-B482-17.}
\address{Institute of Mathematics, Faculty of Mathematics, Physics and Informatics, University of Gda\'{n}sk, ul.~Wita Stwosza 57, 80-308 Gda\'{n}sk, Poland}
\email{adam.kwela@ug.edu.pl}

\begin{document}
\begin{abstract}
The main aim of this paper is to bridge two directions of research generalizing asymptotic density zero sets. This enables to transfer results concerning one direction to the other one.

Consider a function $g\colon\omega\to [0,\infty)$ such that $\lim_{n\to\infty}g(n)=\infty$ and $\frac{n}{g(n)}$ does not converge to $0$. Then the family $\Z_g=\{A\subseteq\omega:\ \lim_{n\to\infty}\frac{\card(A\cap n)}{g(n)}=0\}$ is an ideal called simple density ideal (or ideal associated to upper density of weight $g$). We compare this class of ideals with Erd\H{o}s-Ulam ideals. In particular, we show that there are $\sqsubseteq$-antichains of size $\ce$ among Erd\H{o}s-Ulam ideals which are and are not simple density ideals (in \cite{generalized} it is shown that there is also such an antichain among simple density ideals which are not Erd\H{o}s-Ulam ideals).

We characterize simple density ideals which are Erd\H{o}s-Ulam as those containing the classical ideal of sets of asymptotic density zero. We also characterize Erd\H{o}s-Ulam ideals which are simple density ideals. In the latter case we need to introduce two new notions. One of them, called increasing-invariance of an ideal $\I$, asserts that given $B\in\I$ and $C\subseteq\omega$ with $\card(C\cap n)\leq\card(B\cap n)$ for all $n$, we have $C\in\I$. This notion is inspired by \cite{Inv} and is later applied in \cite{generalized} for a partial solution of \cite[Problem 5.8]{zJackiem}.

Finally, we pose some open problems.
\end{abstract}
\maketitle

\section{Introduction}

We denote the set $\{0,1,\ldots\}$ by $\omega$ and identify $n$ with $\{0,\ldots,n-1\}$. A family $\mathcal{I}$ of subsets of $\omega$ is called an \emph{ideal on $\omega$} if it is closed under taking finite unions and subsets. We additionally assume that each ideal is a proper subset of $\mathscr{P}(\omega)$ and contains all finite subsets of $\omega$.

One of the most classical examples of ideals is 
$$\Z=\left\{A\subseteq\omega:\ \lim_{n\to\infty}\frac{\card(A\cap n)}{n}=0\right\}$$ 
-- the \emph{ideal of asymptotic density zero sets} (cf. \cite[Example 1.2.3.(d)]{Farah}). This ideal has been deeply investigated in the past in the context of convergence (see e.g. \cite{Fast}, \cite{Fridy}, \cite{Salat} and \cite{Steinhaus}) as well as from the set-theoretic point of view (see e.g. \cite{Farah} and \cite{Just}).

W. Just and A. Krawczyk in \cite{Just} (where they solved a question raised by P. Erd\H{o}s and S. Ulam) introduced a generalization of the above. If $f\colon\omega\to [0,\infty)$ is such that $\sum_{i=0}^\infty f(i)=\infty$ and $\lim_{n\to\infty}\frac{f(n)}{\sum_{i=0}^n f(i)}=0$, then we define an \emph{Erd\H{o}s-Ulam ideal $\EU_f$} by 
$$A\in\EU_f\Leftrightarrow\lim_{n\to\infty}\frac{\sum_{i\in A\cap n} f(i)}{\sum_{i=0}^{n-1} f(i)}=0.$$

Over many years other ways of generalizing the ideal $\Z$, asymptotic density or the notion of convergence associated to $\Z$ have been considered (cf. \cite{1}, \cite{Den}, \cite{BoseDas}, \cite{3}). This paper is an attempt to bridge those directions of research in order to transfer results between them. We will concentrate on two ways of generalizing the ideal of asymptotic density zero sets.

Recently, in \cite{Den} the authors proposed another generalization of the ideal $\Z$. Consider a function $g\colon\omega\to [0,\infty)$ such that $\lim_{n\to\infty}g(n)=\infty$ and $\frac{n}{g(n)}$ does not converge to $0$. Then the family $\Z_g=\{A\subseteq\omega:\ \lim_{n\to\infty}\frac{\card(A\cap n)}{g(n)}=0\}$ is an ideal called \emph{simple density ideal} (note that this name was introduced in \cite{generalized} -- before that, although those ideals have been extensively studied, they did not have their own name and were called \emph{ideals associated to upper density of weight $g$} or similarly). Note that the condition that $\frac{n}{g(n)}$ does not converge to $0$ ensures us that $\omega\notin\Z_g$. It is easy to see that $\Z_g=\Z_{\lfloor g\rfloor}$ for each such function $g$. What is more, by \cite[Proposition 2.2]{Den}, for any such $g$ one can find a nondecreasing $f$ with $\Z_f = \Z_g$. Therefore, we can restrict our considerations only to ideals $\Z_g$, where 
$$g\in H=\left\{f\colon\omega\to\omega:\ f\textrm{ is nondecreasing}\ \wedge\ f(n)\rightarrow\infty\ \wedge\ \frac{n}{f(n)}\nrightarrow 0\right\}.$$

Papers \cite{Den} and \cite{generalized} are devoted to investigations of simple density ideals. For instance, in \cite{Den} it is proved that all simple density ideals are $\mathbf{F}_{\sigma\delta}$ but not $\mathbf{F}_{\sigma}$ P-ideals (for definition of a P-ideal see \cite[Section 1.2.]{Farah}).

One motivation for studying simple density ideals is related to various notions of convergence -- the class of simple density ideals (as well as density functions related to them) has been intensively studied in this context. For an ideal on $\omega$ we say that a sequence of reals $(x_n)$ is \emph{$\I$-convergent to $x\in\mathbb{R}$} if $\{n\in\omega:\ |x_n-x|\geq\eps\}\in\I$ for each $\eps>0$. For instance, in \cite{5} the authors studied which subsequences of a given $\Z_g$-convergent sequence, where $g\in H$, are also $\Z_g$-convergent. In \cite{6} the notion of $\Z_g$-convergence has been compared (in the context of fuzzy numbers) with the notion of so-called \emph{$\I$-lacunary statistical convergence of weight $g$}. Paper \cite{4} is in turn an investigation of a variant of convergence associated to simple density ideals and matrix summability methods. 

Another motivation comes from the following question (cf. \cite[Problem 5.8]{zJackiem}): for which ideals $\I$, if $\I\upharpoonright A$ is isomorphic to $\I$ for some $A\subseteq\omega$, then the witnessing isomorphism $\phi\colon\omega\to A$ is just an increasing enumeration of $A$ (for definition of isomorphic ideals see end of this Section). In \cite{generalized} it was shown that all simple density ideals have this property (in case of $\Z$ it was known earlier, cf. \cite[Theorem 5.6]{zJackiem}). However, this is not true for all Erd\H{o}s-Ulam ideals (it is easy to see that if $f(n)=1$ for odd $n$'s and $f(n)=0$ for even $n$'s, then $\EU_f\upharpoonright\omega\setminus\{0\}$ is isomorphic to $\EU_f$, but the increasing enumeration of $\omega\setminus\{0\}$ is not an isomorphism). Moreover, there are not many known examples of ideals with the above property. Therefore, another result from \cite{generalized}, stating that there are $\ce$ many isomorphic types of simple density ideals, becomes especially significant. 

It is easy to see that the classes of Erd\H{o}s-Ulam ideals and simple density ideals have nonempty intersection (for instance $\Z$ is in both those classes). However, there is no inclusion between them (cf. \cite[Examples 3.1 and 3.2]{Den}). The aim of this paper is to investigate further relations between those classes. In particular, it would be interesting to know which Erd\H{o}s-Ulam ideals have the property from the previous paragraph. 

In this context measures supported by $\omega$ will occur to be a very convenient tool. For such measure $\mu$ by $\supp(\mu)$ we denote its \emph{support}, i.e., the set $\{n\in\omega:\ \mu(\{n\})\neq 0\}$. We say that an ideal is a \emph{density ideal} (in sense of Farah) if it is of the form
$$\Exh\left(\sup_{n\in\omega}\mu_n\right)=\left\{A\subseteq\omega:\ \lim_{n\to\infty}\mu_n(A)=0\right\},$$
where $(\mu_n)$ is a sequence of measures on $\omega$ with pairwise disjoint and finite supports (cf. \cite[Section 1.13.]{Farah}). Note that this is a special case of a more general notion -- in this case $\varphi=\sup_{n\in\omega}\mu_n$ is a lower semicontinuous submeasure supported by $\omega$ and $\Exh(\varphi)$ is the exhaustive ideal generated by $\varphi$. S. Solecki proved in \cite{Solecki} that every analytic P-ideal is equal to the exhaustive ideal generated by some lower semicontinuous submeasure $\varphi$.

The importance of density ideals in our studies is a consequence of \cite[Theorem 3.2]{Den} and \cite[Theorem 1.13.3.(a)]{Farah} -- Erd\H{o}s-Ulam ideals as well as simple density ideals are density ideals (in the case of Erd\H{o}s-Ulam ideals we can even assume that all $\mu_n$ are probability measures with supports being consecutive intervals). Moreover, thanks to I. Farah we have the following characterization.

\begin{thm}(\cite[Theorem 1.13.3.(b)]{Farah})
\label{FarahEU}
A density ideal $\Exh(\sup_{n\in\omega}\mu_n)$ is an Erd\H{o}s-Ulam ideal if and only if the following conditions hold:
\begin{enumerate}
\item[(D1)] $\sup_{k\in\omega} \mu_k(\omega)<\infty$,
\item[(D2)] $\lim_{k\to\infty} \sup_{i\in\omega} \mu_k(\{i\})=0$,
\item[(D3)] $\limsup_{k\to\infty} \mu_k(\omega)>0$.
\end{enumerate}
\end{thm}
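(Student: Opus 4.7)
The plan is to prove the two directions separately.

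For ($\Rightarrow$), I would realize $\EU_f$ in a canonical density form and verify the conditions there. Writing $S_n = \sum_{i<n} f(i)$, pick $n_0 = 0$ and inductively let $n_{k+1}$ be the least $n$ with $S_n \geq 2 S_{n_k}$. Define $\nu_k(\{i\}) = f(i)/S_{n_{k+1}}$ for $i \in J_k := [n_k, n_{k+1})$. A standard tail-geometric estimate yields $\EU_f = \Exh(\sup_k \nu_k)$, and then $\nu_k(\omega) = (S_{n_{k+1}} - S_{n_k})/S_{n_{k+1}} \in [1/2, 1)$ gives (D1) and (D3), while $\sup_i \nu_k(\{i\}) \leq \max_{i \in J_k} f(i)/S_{i+1} \to 0$ gives (D2) via the Erd\H{o}s--Ulam condition $f(n)/S_{n+1} \to 0$. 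To pass from the constructed $(\nu_k)$ back to the originally given $(\mu_k)$, one uses the essential uniqueness of density representations of an analytic P-ideal from Farah's general framework.

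For ($\Leftarrow$), given $(\mu_k)$ with (D1)-(D3), I would construct $f$ so that $\EU_f = \Exh(\sup_k \mu_k)$. Set $f(i) = c_k \mu_k(\{i\})$ on $I_k := \supp(\mu_k)$ (and $f(i)=0$ off $\bigcup_k I_k$), with scaling $c_k = 2^k/\mu_k(\omega)$ so that the block sums $T_k := \sum_{i \in I_k} f(i) = 2^k$ double. Then $S_n$ is comparable to $2^k$ near the right end of $I_k$, and the Erd\H{o}s--Ulam ratio is sandwiched as
\[
\frac{\mu_k(A)}{2\mu_k(\omega)} \;\leq\; \frac{\sum_{i \in A \cap n} f(i)}{S_n} \;\leq\; \sum_{j=0}^{k} 2^{j-k-1}\, \frac{\mu_j(A)}{\mu_j(\omega)},
\]
so $\EU_f$ is the ideal of those $A$ with $\mu_j(A)/\mu_j(\omega) \to 0$. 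Under a uniform lower bound on $\mu_j(\omega)$, this coincides with $\mu_j(A) \to 0$, giving $\EU_f = \Exh(\sup_k \mu_k)$; the side conditions $\sum f = \infty$ and $f(n)/S_{n+1} \to 0$ follow respectively from $\sum T_k = \infty$ and from $f(n)/S_{n+1} \leq \sup_i \mu_k(\{i\})/(2\mu_k(\omega)) \to 0$ using (D2).

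The main obstacle is that (D3) only gives $\limsup_k \mu_k(\omega) > 0$, not the uniform lower bound that the construction above requires. To bridge this I would pre-process the sequence $(\mu_k)$: indices $k$ with $\mu_k(\omega) \to 0$ impose no condition on ideal membership (because $\mu_k(A) \leq \mu_k(\omega) \to 0$ automatically) and can be dropped without changing the ideal, while the remaining indices can be arranged, possibly after merging neighbouring small-mass $\mu_k$'s into their neighbours, into an infinite subfamily on which $\mu_k(\omega)$ is bounded away from zero. Making this reduction airtight in full generality, when $(\mu_k(\omega))_k$ may have many cluster values in $(0,\sup_k\mu_k(\omega)]$, calls for a careful level-set decomposition using (D1) to bound the range and (D3) to keep the extracted subfamily infinite; this is the technical heart of the proof, and everything else is bookkeeping around the geometric construction of $f$.
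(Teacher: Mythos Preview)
The paper does not prove this theorem at all: it is quoted verbatim from Farah's monograph and used as a black box throughout. So there is no in-paper argument to compare your proposal against.

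That said, your sketch has a genuine gap in the $(\Rightarrow)$ direction. You build a \emph{specific} density representation $(\nu_k)$ of $\EU_f$ and verify (D1)--(D3) for that one, then appeal to an ``essential uniqueness of density representations'' to transfer (D1)--(D3) back to the \emph{given} sequence $(\mu_k)$. But the statement is precisely that \emph{any} density representation of an Erd\H{o}s--Ulam ideal satisfies (D1)--(D3), so this transfer is the whole content of the forward direction, not a bookkeeping step. While (D2) and (D3) are genuine invariants of the ideal (they encode tallness and $\omega\notin\I$, as the paper itself notes in the proof of Proposition~\ref{EU<=>bounded}), it is not at all obvious that (D1) is preserved under change of density representation, and Farah's framework does not hand you this for free. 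The forward direction therefore needs a direct argument showing that unbounded $\mu_k(\omega)$ is incompatible with the ideal being Erd\H{o}s--Ulam, rather than a detour through an auxiliary representation.

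Your $(\Leftarrow)$ direction is on firmer ground: the doubling construction of $f$ and the two-sided geometric sandwich are exactly the right idea, and you correctly identify the reduction from $\limsup_k\mu_k(\omega)>0$ to a uniform positive lower bound as the crux. Your plan to drop null-mass blocks and merge small-mass blocks into neighbours is the standard manoeuvre here; it works, but one must check that merging preserves both the ideal and condition (D2), and that after merging the resulting block masses are bounded above as well as below (otherwise the sandwich inequality breaks). This is routine once stated carefully, but it is more than ``bookkeeping''.
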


In our further considerations we will also need orders on ideals. Let $\I$ and $\J$ be two ideals on $\omega$. We say that:
\begin{itemize}
 \item \emph{$\I$ and $\J$ are isomorphic} ($\mathcal{I}\cong\mathcal{J}$)  if there is a bijection $\phi\colon\omega\to\omega$ such that $A\in\I\Longleftrightarrow\phi^{-1}[A]\in\J$ for all $A\subseteq\omega$;
  \item \emph{$\I$ is below $\J$ in the Rudin-Blass order} ($\mathcal{I}\leq_{RB}\mathcal{J}$) if there is a finite-to-one function $\phi\colon\omega\to\omega$ such that $A\in\I\Longleftrightarrow\phi^{-1}[A]\in\J$ for all $A\subseteq\omega$;
 \item \emph{$\J$ contains an isomorphic copy of $\I$} ($\mathcal{I}\sqsubseteq\mathcal{J}$) if there is a bijection $\phi\colon\omega\to\omega$ such that $A\in\I\Longrightarrow\phi^{-1}[A]\in\J$ for all $A\subseteq\omega$;
 \item \emph{$\I$ is below $\J$ in the Kat\v{e}tov order} ($\mathcal{I}\leq_{K}\mathcal{J}$) if there is a function (not necessarily a bijection) $\phi\colon\omega\to\omega$ such that $A\in\I\Longrightarrow\phi^{-1}[A]\in\J$ for all $A\subseteq\omega$. 
\end{itemize}
It is obvious that $\mathcal{I}\cong\mathcal{J}$ is the strongest among the above notions and $\mathcal{I}\leq_K\mathcal{J}$ is the weakest. We will also use the following: if $\preceq$ is some order on ideals, then we say that:
\begin{itemize}
 \item $\I$ and $\J$ are \emph{$\preceq$-equivalent} if $\mathcal{I}\preceq\mathcal{J}$ and $\mathcal{J}\preceq \mathcal{I}$;
 \item a family of ideals $\mathcal{F}$ is an \emph{$\preceq$-antichain} if $\mathcal{I}\not\preceq\mathcal{J}$ and $\mathcal{J}\not\preceq\mathcal{I}$ for every pair of ideals $\I,\J\in\mathcal{F}$.
\end{itemize}

A property of ideals can often be expressed by finding a critical ideal (in sense of some order on ideals) with respect to this property (see \cite{WR} and \cite{Solecki}). This approach is very effective for instance in the context of ideal convergence of sequences of functions (see \cite{zReclawem} and \cite{zMarcinem}). One such result regarding simple density ideals can be found in Theorem \ref{simple-EU}.

The paper is organized as follows. In Section 2 we apply orders on ideals on $\omega$ to studies of simple density ideals. Section 3 is devoted to characterizing simple density ideals which simultaneously are Erd\H{o}s-Ulam. We give a characterization which does not use the function generating the simple density ideal. In Section 4 we introduce the notion of increasing-invariant ideals and then use it in Section 5 to construct antichains of Erd\H{o}s-Ulam ideals which are and which are not simple density ideals. Section 6 contains our main result -- a characterization of Erd\H{o}s-Ulam ideals which simultaneously are simple density ideals. This part of the paper is divided into three subsections in which we prove some lemmas, introduce the notion of almost uniformly distributed ideals and prove the main result. Finally, in Section 7 we pose some open problems.

\section{When a simple density ideal is an Erd\H{o}s-Ulam ideal?}

The aim of this section is to characterize simple density ideals which simultaneously are Erd\H{o}s-Ulam ideals. One such characterization was established in \cite{generalized}: if $g\in H$, then the ideal $\Z_g$ is an Erd\H{o}s-Ulam ideal if and only if the sequence $\left(\frac{\card(g^{-1}[[2^n,2^{n+1})])}{2^n}\right)_{n\in\omega}$ is bounded. We give several other (less technical) equivalent conditions. In particular, we describe simple density Erd\H{o}s-Ulam ideals in terms of orders on ideals. This approach enables us to express the fact that a simple density ideal $\Z_g$ is Erd\H{o}s-Ulam without using the function $g$.

We start with a result which turns out to be convenient for our further considerations.

\begin{prop}
\label{EU<=>bounded}
The simple density ideal $\Z_g$ is an Erd\H{o}s-Ulam ideal if and only if the sequence $(n/g(n))_{n\in\omega}$ is bounded.
\end{prop}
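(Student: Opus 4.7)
The plan is to deduce the proposition from the characterization already cited in the preceding paragraph from \cite{generalized}: $\Z_g$ is an Erd\H{o}s-Ulam ideal iff $a_n:=\card(g^{-1}[[2^n,2^{n+1})])/2^n$ is bounded. Granted that, the proposition reduces to the purely arithmetic equivalence
\[
(a_n)_{n\in\omega}\text{ is bounded}\iff (n/g(n))_{n\in\omega}\text{ is bounded.}
\]
Since $g$ is nondecreasing with $g(n)\to\infty$, $g^{-1}(y):=\min\{k:g(k)\geq y\}$ is well defined for $y\geq 1$, $g^{-1}[[2^n,2^{n+1})]$ is an interval of integers, and its cardinality equals $g^{-1}(2^{n+1})-g^{-1}(2^n)$.

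For the easy direction, suppose $n/g(n)\leq M$ holds eventually, so $g(k)\geq k/M$ for all large $k$. Then choosing $k=\lceil My\rceil$ shows $g^{-1}(y)\leq My+1$, and therefore
\[
\card(g^{-1}[[2^n,2^{n+1})])\leq g^{-1}(2^{n+1})\leq M\cdot 2^{n+1}+1.
\]
Dividing by $2^n$ gives $a_n\leq 2M+o(1)$, hence bounded.

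For the converse, assume $a_n\leq C$ for every $n$. Telescoping,
\[
g^{-1}(2^{n+1}) = g^{-1}(1) + \sum_{m=0}^{n}\card(g^{-1}[[2^m,2^{m+1})]) \leq g^{-1}(1) + C\sum_{m=0}^{n}2^m \leq 2C\cdot 2^{n+1}.
\]
For any sufficiently large $k$, there is a unique $n$ with $g(k)\in[2^n,2^{n+1})$; for this $n$ we have $k<g^{-1}(2^{n+1})\leq 4C\cdot 2^n + O(1)$ and $g(k)\geq 2^n$, so $k/g(k)\leq 4C+o(1)$. Thus $(n/g(n))$ is eventually bounded, hence bounded.

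The only real obstacle is bookkeeping around the initial segment where $g$ may vanish or be small; this is harmless because both boundedness statements are finite‑modification invariant, and the telescoping estimates absorb the $O(1)$ additive contributions. (If one prefers to avoid invoking \cite{generalized}, the same equivalence can be read off directly from Farah's Theorem \ref{FarahEU} applied to the density‑ideal representation $\Z_g=\Exh(\sup_n\mu_n)$ with $\mu_n(A)=\card(A\cap g^{-1}[[2^n,2^{n+1})])/2^n$: (D2) is automatic since atoms have mass $\leq 2^{-n}$, (D3) follows from $g\in H$ via the subsequence on which $n/g(n)$ stays bounded away from $0$, and (D1) is precisely boundedness of $a_n$, equivalent to $(n/g(n))$ being bounded by the estimates above.)
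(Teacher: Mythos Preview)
Your argument is correct. The telescoping estimate for $g^{-1}(2^{n+1})$ and the subsequent bound on $k/g(k)$ are the heart of the matter, and you handle them cleanly; the minor constants ($2C$ versus $4C$, the additive $g^{-1}(1)$) are indeed absorbed by the ``eventually'' and $O(1)$ as you note.

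The paper proceeds differently. Rather than invoking the \cite{generalized} criterion and reducing to your arithmetic equivalence, it works directly with the density-ideal representation from \cite[Theorem~3.2]{Den}, namely $\mu_k(A)=\card(A\cap[n_k,n_{k+1}))/g(n_k)$ where $n_{k+1}=\min\{n:g(n)\geq 2g(n_k)\}$, and checks Farah's conditions (D1)--(D3) by hand: (D2) and (D3) are dispatched via tallness and $\omega\notin\Z_g$, and (D1) is shown equivalent to boundedness of $(n/g(n))$ by a short geometric-series estimate. Your route is shorter because it leverages the \cite{generalized} characterization already quoted in the paper, at the cost of making the proposition logically dependent on that reference; the paper's route is self-contained modulo \cite{Den} and \cite{Farah}. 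Your parenthetical alternative is essentially the paper's strategy transplanted to the dyadic partition $g^{-1}[[2^n,2^{n+1})]$ instead of the $[n_k,n_{k+1})$ partition, and works for the same reasons.
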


\begin{proof}
By \cite[Theorem 3.2]{Den} we know that $\Z_g=\Exh(\sup_{k\in\omega}\mu_k)$, where $\mu_k(A)=\frac{\card(A\cap [n_k,n_{k+1}))}{g(n_k)}$ and $n_0=1$, $n_{k+1}=\min\{n\in\omega:\ g(n)\geq 2g(n_k)\}$ for $k\in\omega$. Then, by \cite[Theorem 1.13.3.(b)]{Farah} the ideal $\Z_g$ is an Erd\H{o}s-Ulam ideal if and only if the following conditions hold:
\begin{enumerate}
\item[(D1)] $\sup_{k\in\omega} \mu_k(\omega) < \infty$,
\item[(D2)] $\lim_{k\to\infty} \sup_{i\in\omega} \mu_k(\{i\}) = 0$,
\item[(D3)] $\limsup_{k\to\infty} \mu_k(\omega)>0$.
\end{enumerate}
Condition (D3) is satisfied as it is equivalent to $\omega\notin\Z_g$ (cf. \cite[discussion above Theorem 3.3]{Den}). Condition (D2) is satisfied since all simple density ideals are tall (cf. \cite[discussion above Proposition 1.1]{Den}) and (D2) is equivalent to $\Exh(\sup_{k\in\omega}\mu_k)$ being tall (\cite[Theorem 3.3]{Den}). Therefore, it remains to check that (D1) is equivalent to $(n/g(n))$ being bounded. 

Suppose that there is $\delta>0$ such that $n/g(n)<\delta$ for all $n$. Then 
$$\mu_k(\omega)=\frac{n_{k+1}-n_k}{g(n_k)}\leq\frac{n_{k+1}}{g(n_k)}<\frac{2n_{k+1}}{g(n_{k+1}-1)}=\frac{2(n_{k+1}-1)}{g(n_{k+1}-1)}+\frac{2}{g(n_{k+1}-1)}<\delta+\frac{2}{g(0)}$$
for all $k$.

Suppose now that there is $\delta>0$ such that $\mu_k(\omega)<\delta$ for all $k$. Then for each $k$ and $n_k<i\leq n_{k+1}$ we have
$$\frac{i}{g(i)}\leq\frac{n_{k+1}}{g(n_k)}=\frac{n_0}{g(n_k)}+\sum_{i=0}^{k}\frac{n_{i+1}-n_i}{g(n_k)}\leq\frac{n_0}{g(0)}+\sum_{i=0}^{k}\frac{n_{i+1}-n_i}{2^{k-i}g(n_i)}\leq \frac{n_0}{g(0)}+2\delta.$$
\end{proof}

Our next result shows that $\Z$ is a critical ideal for Erd\H{o}s-Ulam ideals among all simple density ideals. This characterization does not use the function generating a simple density ideal. 

\begin{thm}
\label{simple-EU}
If $\I$ is a simple density ideal, then the following conditions are equivalent:
\begin{enumerate}
\item $\I$ is an Erd\H{o}s-Ulam ideal;
\item $\Z\subseteq\I$;
\item $\Z\leq_K\I$.
\end{enumerate}
\end{thm}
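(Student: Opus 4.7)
The plan is to close the cycle (1)$\Rightarrow$(2)$\Rightarrow$(3)$\Rightarrow$(1). The first two implications are easy. For (1)$\Rightarrow$(2), Proposition~\ref{EU<=>bounded} provides a constant $C$ with $n/g(n)\le C$ for all $n$; then, for any $A\in\Z$, $\card(A\cap n)/g(n)=(\card(A\cap n)/n)\cdot(n/g(n))\le C\cdot \card(A\cap n)/n\to 0$, so $A\in\Z_g$. The implication (2)$\Rightarrow$(3) uses $\phi=\id$ as the Kat\v{e}tov witness.

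The substantial implication is (3)$\Rightarrow$(1), which I attack by the contrapositive: assume $\Z_g$ is not Erd\H{o}s-Ulam, so by Proposition~\ref{EU<=>bounded} the sequence $n/g(n)$ is unbounded; I show that no $\phi\colon\omega\to\omega$ can witness $\Z\leq_K\Z_g$. Given such a candidate $\phi$, applying the Kat\v{e}tov condition to finite sets (which lie in $\Z$) yields $\phi^{-1}[F]\in\Z_g$ for every finite $F\subseteq\omega$. Setting $c_i(n)=\card(\phi^{-1}\{i\}\cap n)$, this translates to $\sum_{i<N}c_i(n)/g(n)\to 0$ as $n\to\infty$ for every fixed $N$. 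Combined with $\sum_i c_i(n)/g(n)=n/g(n)$ being unbounded, this means the tail $\sum_{i\ge N}c_i(n)/g(n)$ is unbounded in $n$ for every $N$.

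I build $A=\bigsqcup_j F_j\in\Z$ with $\card(\phi^{-1}[A]\cap m_j)/g(m_j)\ge 1$ for an increasing sequence $m_j$, which forces $\phi^{-1}[A]\notin\Z_g$ and contradicts~(3). At step $j$ I first fix a threshold $N_{j-1}$ exceeding $\max F_{j-1}$, then pick $m_j>m_{j-1}$ with $m_j/g(m_j)\ge 2^j$ and large enough that $\sum_{i<N_{j-1}}c_i(m_j)/g(m_j)<1$, so the tail mass in $[N_{j-1},\infty)$ is at least $2^j-1$; a greedy selection then gives a finite $F_j\subseteq[N_{j-1},\infty)$ with $\sum_{i\in F_j}c_i(m_j)/g(m_j)\ge 1$ and, by the standard bound that greedy reaches mass~$1$ using at most ``support size divided by tail mass'' atoms, $\card(F_j)=O(m_j/2^{j-1})$.

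The main obstacle is ensuring $A\in\Z$, i.e.\ $(\card(F_1)+\cdots+\card(F_j))/N_{j-1}\to 0$, while the greedy bound on $\card(F_j)$ depends on $m_j$ chosen after $N_{j-1}$. The key point, to be exploited in coordinating the two, is that the Kat\v{e}tov hypothesis forces $\phi$'s values to disperse: for each fixed $K$, at most $o(g(m))$ elements of $[0,m)$ are sent into $[0,K)$, so (since $m/g(m)\geq 2^j$ gives $m\geq 2^jg(m)$) at least $g(m)$ of them land above any prescribed $K$ for $m$ sufficiently large. This dispersion allows one either to place $F_j$ as a singleton (when some fiber $c_i(m_j)$ already exceeds $g(m_j)$) or, in the generic regime where all relevant fibers are small, to ensure that $F_j$ lies in a sufficiently high range compared with its cardinality, so that the density contribution of each $F_j$ to $\card(A\cap n)/n$ vanishes. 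Reconciling the greedy size bound with the density requirement on $N_{j-1}$ is the only delicate bookkeeping in the argument.
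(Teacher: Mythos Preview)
Your treatment of (1)$\Rightarrow$(2) and (2)$\Rightarrow$(3) is correct and matches the paper. For (3)$\Rightarrow$(1) you take a genuinely different route than the paper: you work with the fiber counts $c_i(m)=\card(\phi^{-1}\{i\}\cap m)$ and try to assemble $A$ from finite blocks $F_j$, whereas the paper fixes intervals $I_n=(f(m_n),(2n+2)f(m_n)]$ in the \emph{domain} of $\phi$ and does a three-case analysis according to whether $\phi$ sends most of $I_n$ above, inside, or below $I_n$.

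There is, however, a real gap in your argument, and it is not bookkeeping. Your greedy selection only bounds $\card(F_j)$, not the local density of $F_j$, and the sufficient condition you aim for, $(\card(F_1)+\cdots+\card(F_j))/N_{j-1}\to 0$, is in general unreachable with greedy-by-mass. Take $\phi=\id$: every fiber has size $1$, so greedy must take $\card(F_j)=\lceil g(m_j)\rceil$ points, while your side condition $\sum_{i<N_{j-1}}c_i(m_j)/g(m_j)<1$ reads $N_{j-1}<g(m_j)$. Hence $\card(F_j)>N_{j-1}$ for \emph{every} admissible choice of $m_j$, and your ratio exceeds $1$. Your ``dispersion'' observation (that at most $o(g(m))$ points of $[0,m)$ land below a fixed $K$) is correct but does not rescue this: it tells you mass is available above any threshold, but says nothing about the cardinality needed to collect it there. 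The phrase ``ensure that $F_j$ lies in a sufficiently high range compared with its cardinality'' is precisely the missing lemma, not a detail.

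What actually closes the gap is selecting by \emph{position} rather than by mass: partition the support $\{i\ge N_{j-1}:c_i(m_j)>0\}$ into $d$ arithmetic classes (by rank) and take the $c$-heaviest class; this yields $F_j$ with $\card(F_j\cap n)/n\le 1/d+O(1/n)$ and mass $\ge(\text{tail mass})/d$, so choosing $m_j$ with $m_j/g(m_j)$ large lets $d\to\infty$. This position-based ``one heaviest atom per subinterval'' idea is exactly the content of the paper's Cases~2 and~3, where the set $A$ is built by picking, in each short subinterval of the codomain, the point with the largest $\phi$-preimage in $I_n$. So the heart of the paper's proof is the step you dismissed as bookkeeping.
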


\begin{proof}
The implication (2)$\implies$(3) is trivial. Therefore, we only need to prove (1)$\implies$(2) and (3)$\implies$(1). Let $f\in H$ be such that $\Z_f=\I$. We will use Proposition \ref{EU<=>bounded}.

(1)$\implies$(2): Suppose that there is $\delta>0$ such that $n/f(n)<\delta$ for all $n\in\omega$. We will show that $\Z_g\subseteq \Z_f$ for $g\colon\omega\to\mathbb{R}_+$ given by $g(n)=n/\delta$. Take any $B\in\Z_g$. Then 
$$\lim_{n\to\infty}\frac{\card(B\cap n)}{f(n)}\leq\lim_{n\to\infty}\delta\frac{\card(B\cap n)}{n}=\lim_{n\to\infty}\frac{\card(B\cap n)}{g(n)}=0.$$
Now it remains to observe that $\Z_g=\Z$. Indeed, we have $\delta\lim_{n\to\infty}\card(A\cap n)/n=\lim_{n\to\infty}\card(A\cap n)/g(n)$ for every $A\subseteq\omega$.

(3)$\implies$(1): Suppose that $\Z_f$ is not an Erd\H{o}s-Ulam ideal and fix any function $\phi\colon\omega\to\omega$. We will show that there is $A\in\Z$ such that $\phi^{-1}[A]\notin\Z_f$. There is a sequence $(m_n)$ such that $m_n/f(m_n)>2n+3$ for all $n\in\omega$. We may additionally assume that $(2n+2)f(m_n)<f(m_{n+1})$. Note that this implies $\sum_{i=0}^{n-1}f(m_i)\leq nf(m_{n-1})<f(m_{n})$. 

Consider the intervals $I_n=(f(m_n),(2n+2)f(m_n)]$, for $n\in\omega$, and denote:
$$B_{n}=\{i\in I_{n}:\ \phi(i)>\max I_n\};$$
$$C_{n}=\{i\in I_{n}:\ \phi(i)\in I_n\};$$
$$D_{n}=\{i\in I_{n}:\ \phi(i)<\min I_n\}.$$
At least one of the following three cases must happen. 

\textbf{Case 1.: }For infinitely many $n\in\omega$ we have $\card(B_{n})\geq f(m_n)$. Let $(n_j)_{j\in\omega}$ be an increasing enumeration of the set of $n$'s with such property. For each $j\in\omega$ let $A_{j}$ be any subset of $B_{n_j}$ of cardinality $f(m_n)$. Define $A=\bigcup_{j\in\omega}A_{j}$. Then we have $\frac{\card(\phi[A]\cap i)}{i}\leq \frac{2f(m_n)}{(2n+2)f(m_n)}\leq\frac{2}{2n+2}$ for all $\max I_n<i\leq\max I_{n+1}$. On the other hand, $\card(\phi^{-1}[\phi[A]]\cap m_n)/f(m_n)\geq 1$. Hence, $\phi[A]\in\Z$ and $\phi^{-1}[\phi[A]]\notin\Z_f$.

\textbf{Case 2.: }For infinitely many $n\in\omega$ we have $\card(C_{n})\geq nf(m_n)$. Let $(n_j)_{j\in\omega}$ be an increasing enumeration of the set of $n\neq 0$ with such property. For each $j\in\omega$ and $l=0,1,\ldots,f(m_{n_j})-1$ pick $a_{j,l}\in(f(m_{n_j})+l(2n_j+1),f(m_{n_j})+(l+1)(2n_j+1)]$ such that $\card(\phi^{-1}[\{a_{j,l}\}]\cap I_{n_j})$ is as big as possible among all points from $(f(m_{n_j})+l(2n_j+1),f(m_{n_j})+(l+1)(2n_j+1)]$. 

Define $A_j=\{a_{j,l}:\ l=0,1,\ldots,f(m_{n_j})-1\}$ and $A=\bigcup_{j\in\omega}A_j$. Note that $\card(\phi^{-1}[A_j]\cap I_{n_j})\geq n_j f(m_{n_j})/(2n_j+1)$. Indeed, otherwise we would have $\card(C_{n_j})\leq (2n_j+1)\card(\phi^{-1}[A_j]\cap I_{n_j})<n_j f(m_{n_j})$, a contradiction. Therefore, we have $\card(\phi^{-1}[A]\cap m_{n_j})/f(m_{n_j})\geq n_j/(2n_j+1)\geq 1/3$ (recall that $n_j>0$). It follows that $\phi^{-1}[A]\notin\Z_{f}$. However, for all $\min I_{n_j}\leq i<\min I_{n_{j+1}}$ we have
$$\frac{\card(A\cap i)}{i}\leq\frac{2f(m_{n_{j-1}})+(l+1)}{f(m_{n_j})+l(2n_j+1)}<\frac{2f(m_{n_{j-1}})}{f(m_{n_j})}+\frac{l+1}{l(2n_j+1)}\leq\frac{2}{2n_j+2}+\frac{2}{2n_j+1}$$ 
if $i\in(f(m_{n_j})+l(2n_j+1),f(m_{n_j})+(l+1)(2n_j+1)]$ and
$$\frac{\card(A\cap i)}{i}\leq\frac{2f(m_{n_j})}{f(m_{n_j})(2n_j+2)}<\frac{1}{n_j+1}$$
if $i>\max I_{n_j}$. Hence, $A\in\Z$.

\textbf{Case 3.: }For infinitely many $n\in\omega$ we have $\card(D_{n})\geq n f(m_n)$. Let $N$ consist of all $n$'s with such property. We inductively pick an increasing sequence $(A_j)$ of finite sets. At step $j$ let $t_{j}>\max A_{j-1}$ be such that $\card(A_{j-1})/t_{j}<1/j$. Find such $n_j\in N$ that $\card(D_{n_j}\setminus\phi^{-1}[[0,t_j)])\geq n_j f(m_{n_j})/2$ (if such $n_j$ does not exist, then we are done as $\phi^{-1}[[0,t_j)]\notin\Z_{f}$ by $\card(\phi^{-1}[[0,t_j)]\cap m_{n})/f(m_n)\geq n/2\geq 1/2$ for all $n\in N$). We can additionally assume that $1/(f(m_{n_j})-t_j)<1/j$. For each $l=0,1,\ldots,\lceil (f(m_{n_j})-l_j)/n_j\rceil-1$ pick 
$$a_{j,l}\in\left(t_j+l\frac{f(m_{n_j})-t_j}{\lceil (f(m_{n_j})-t_j)/n_j\rceil},t_j+(l+1)\frac{f(m_{n_j})-t_j}{\lceil (f(m_{n_j})-t_j)/n_j\rceil}\right]$$
such that $\card(\phi^{-1}[\{a_{j,l}\}]\cap I_{n_j})$ is as big as possible. Let $A_j=\{a_{j,l}:\ l=0,1,\ldots,\lceil (f(m_{n_j})-l_j)/n_j\rceil-1\}$.

Define $A=\bigcup_{j\in\omega}A_j$. Note that $\card(\phi^{-1}[A_j]\cap I_{n_j})\geq f(m_{n_j})/2$. Indeed, otherwise we would have 
$$\card(D_{n_j}\setminus\phi^{-1}[[0,t_j)])<\frac{f(m_{n_j})-t_j}{\lceil (f(m_{n_j})-t_j)/n_j\rceil}\frac{f(m_{n_j})}{2}\leq\frac{n_j f(m_{n_j})}{2},$$ 
a contradiction. Therefore, we have $\card(\phi^{-1}[A]\cap m_{n_j})/f(m_{n_j})\geq 1/2$. It follows that $\phi^{-1}[A]\notin\Z_{f}$. However, for all $t_j\leq i\leq f(m_{n_j})$ we have
$$\frac{\card(A\cap i)}{i}\leq\frac{\card(A_{j-1})+l+1}{t_j+l\frac{f(m_{n_j})-t_j}{\lceil (f(m_{n_j})-t_j)/n_j\rceil}}<$$
$$<\frac{\card(A_{j-1})}{t_j}+\frac{l+1}{l\frac{f(m_{n_j})-t_j}{\lceil (f(m_{n_j})-t_j)/n_j\rceil}}\leq\frac{1}{j}+2\frac{1+(f(m_{n_j})-t_j)/n_j}{f(m_{n_j})-t_j}\leq\frac{3}{j}+\frac{2}{n_j},$$ 
where 
$$i\in\left(t_j+l\frac{f(m_{n_j})-t_j}{\lceil (f(m_{n_j})-t_j)/n_j\rceil},t_j+(l+1)\frac{f(m_{n_j})-t_j}{\lceil (f(m_{n_j})-t_j)/n_j\rceil}\right]$$
and $\card(A\cap i)/i=\card(A\cap f(m_{n_j}))/f(m_{n_j})$ for all $f(m_{n_j})<i<t_{j+1}$. Hence, $A\in\Z$.
\end{proof}

In \cite[Theorem 1.13.10]{Farah} I. Farah proved that all Erd\H{o}s-Ulam ideals are $\leq_{RB}$-equivalent. Theorem \ref{simple-EU} enables us to improve this result in the case of simple density ideals -- we obtain equivalence instead of implication in only one direction.

\begin{cor}
\label{cor}
A simple density ideal is an Erd\H{o}s-Ulam ideal if and only if it is $\leq_{RB}$-equivalent to $\Z$.
\end{cor}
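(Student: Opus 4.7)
The plan is to derive the corollary as a direct consequence of Theorem \ref{simple-EU} combined with Farah's $\leq_{RB}$-equivalence result for Erd\H{o}s-Ulam ideals, plus the trivial observation that any $\leq_{RB}$-reduction is in particular a $\leq_K$-reduction (finite-to-one maps are maps).

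For the forward direction, I would note that $\Z$ itself is an Erd\H{o}s-Ulam ideal (take $f\equiv 1$). Thus if $\I$ is a simple density ideal that happens to be Erd\H{o}s-Ulam, then both $\I$ and $\Z$ lie in the class of Erd\H{o}s-Ulam ideals, and Farah's theorem \cite[Theorem 1.13.10]{Farah} immediately gives $\I$ and $\Z$ are $\leq_{RB}$-equivalent.

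For the backward direction, assume $\I$ is $\leq_{RB}$-equivalent to $\Z$. In particular $\Z\leq_{RB}\I$, and since a finite-to-one function is still a function, this yields $\Z\leq_K\I$. Invoking the implication (3)$\implies$(1) of Theorem \ref{simple-EU}, we conclude that $\I$ is an Erd\H{o}s-Ulam ideal.

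There is no real obstacle here: the corollary is essentially a repackaging of Theorem \ref{simple-EU} in terms of the Rudin--Blass order, the key inputs (Farah's equivalence theorem and the trivial monotonicity $\leq_{RB}\,\Rightarrow\,\leq_K$) being already on the table. The only point worth double-checking is that $\Z$ genuinely belongs to the class of Erd\H{o}s-Ulam ideals to which Farah's theorem applies, which is immediate from the definition with constant weight $f\equiv 1$.
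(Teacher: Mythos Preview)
Your proof is correct and follows essentially the same route as the paper: Farah's $\leq_{RB}$-equivalence of all Erd\H{o}s-Ulam ideals for the forward direction, and the implication $\leq_{RB}\Rightarrow\leq_K$ together with (3)$\Rightarrow$(1) of Theorem~\ref{simple-EU} for the backward direction. The only addition you make is the explicit remark that $\Z$ itself is Erd\H{o}s-Ulam, which the paper leaves implicit.
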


\begin{proof}
By \cite[Theorem 1.13.10]{Farah} all Erd\H{o}s-Ulam ideals are Rudin-Blass equivalent. This gives us the implication from left to right. On the other hand, if some $\Z_f$ is $\leq_{RB}$-equivalent to $\Z$ then $\Z\leq_K\Z_f$. Hence, $\Z_f$ is an Erd\H{o}s-Ulam ideal by the equivalence of (1) and (3) from Theorem \ref{simple-EU}.
\end{proof}

\begin{remark}
\label{EU:RB<->K}
Note that by \cite[Theorem 4]{generalized} we have $\I\leq_K\Z$ for each simple density ideal $\I$. Hence, $\leq_{RB}$-equivalence in Corollary \ref{cor} can be replaced by $\leq_{K}$-equivalence. Equivalence of (2) from previous Theorem \ref{simple-EU} and the condition from Proposition \ref{EU<=>bounded} should be in turn compared with \cite[Lemma 5]{generalized} stating that $\Z_f\subseteq\Z$ if and only if $\liminf_{n\to\infty}n/f(n)>0$. 
\end{remark}

\section{Increasing-invariance}

In this section we introduce the notion of increasing-invariant ideals. It will be very useful in characterizing Erd\H{o}s-Ulam ideals which simultaneously are simple density ideals.

\begin{df}
We say that an ideal $\I$ is \emph{increasing-invariant} if for every $B\in\I$ and $C\subseteq\omega$ satisfying $\card(C\cap n)\leq\card(B\cap n)$ for all $n$, we have $C\in\I$.
\end{df}

\begin{remark}
The above notion is inspired by \cite[Section 4]{Inv}, where the authors studied ideals $\I$ satisfying the following: for any increasing injection $f\colon\omega\to\omega$ condition $B\in\I$ implies $f[B]\in\I$. It is easy to see that the above is equivalent to increasing-invariance of $\I$.
\end{remark}

\begin{remark}
Sometimes an ideal $\I$ is called shift-invariant if $A\in\I$ implies $\{a+k:\ a\in A\}\cap\omega\in\I$ for every $k\in\mathbb{Z}$ (note that here the shift is the same for all points from $A$ while our notion of increasing-invariance allows different shifts for different points).
\end{remark}

Let us point out that increasing-invariance is equivalent to the following (perhaps more natural) statement.

\begin{prop}
\label{delta}
An ideal $\I$ is increasing-invariant if and only if for every $B\in\I$ and $C\subseteq\omega$ such that there is $\delta>0$ satisfying $\card(C\cap n)\leq\delta\card(B\cap n)$ for all $n$, we have $C\in\I$.
\end{prop}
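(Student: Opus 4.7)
The ``if'' direction is immediate: specializing the hypothesis to $\delta=1$ recovers the definition of increasing-invariance verbatim, so the real content lies in the ``only if'' direction.

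My plan for the converse is to exploit the closure of $\I$ under finite unions and split $C$ into $k=\lceil\delta\rceil$ pieces, each of which is pointwise dominated by $B$ in the sense of the original definition. I will enumerate $C$ in increasing order as $c_0<c_1<\cdots$ (allowing the enumeration to be finite) and set $C_i=\{c_j:j\equiv i\pmod{k}\}$ for $i<k$; this is the standard round-robin partition of $C$ into $k$ disjoint parts whose union is $C$.

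The key step will be to verify that each $C_i$ satisfies $\card(C_i\cap n)\leq\card(B\cap n)$ for every $n$. A direct pigeonhole count of the indices $j<\card(C\cap n)$ congruent to $i$ modulo $k$ gives $\card(C_i\cap n)\leq\lceil\card(C\cap n)/k\rceil$. Combining the hypothesis with $k\geq\delta$ yields $\card(C\cap n)/k\leq(\delta/k)\card(B\cap n)\leq\card(B\cap n)$, and since $\card(B\cap n)$ is an integer the ceiling on the left is still bounded by it. Increasing-invariance then forces each $C_i\in\I$, and finite additivity of $\I$ delivers $C=\bigcup_{i<k}C_i\in\I$.

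I do not anticipate a serious obstacle. The only points requiring care are the round-robin counting inequality for $\card(C_i\cap n)$ and the observation that the ceiling step succeeds precisely because $\card(B\cap n)\in\omega$. Conceptually, increasing-invariance itself carries no multiplicative flexibility, but the ideal axioms supply exactly $\lceil\delta\rceil$ copies' worth of slack through closure under finite unions, and that is all that is needed to absorb the constant $\delta$.
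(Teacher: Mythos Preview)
Your proof is correct and follows essentially the same route as the paper: both split $C$ into $\lceil\delta\rceil$ residue classes along its increasing enumeration, bound each class by $B$, and invoke increasing-invariance together with closure under finite unions. The only cosmetic difference is that the paper drops the first element from each class (absorbing $\{c_0,\dots,c_{\lceil\delta\rceil-1}\}$ as a separate finite set) to get $\card(C_k\cap n)\leq\card(C\cap n)/\lceil\delta\rceil$ directly, whereas you keep all elements and dispose of the resulting ceiling via the integrality of $\card(B\cap n)$.
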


\begin{proof}
The implication from right to left is obvious. In order to prove the converse implication, assume that $\I$ is increasing-invariant and fix $B\in\I$ and $C\subseteq\omega$ such that there is $\delta>0$ with $\card(C\cap n)\leq\delta\card(B\cap n)$ for all $n$. If $\delta\leq 1$, then $\card(C\cap n)\leq\delta\card(B\cap n)\leq\card(B\cap n)$ for all $n$. Hence, $C\in\I$ by the increasing-invariance of $\I$. If $\delta>1$, then let $(c_i)$ be an increasing enumeration of the set $C$. Define $C_k=\{c_{k+i\lceil\delta\rceil}:\ i\in\omega\setminus\{0\}\}$ for $k=0,1,\ldots,\lceil\delta\rceil-1$. Observe that 
$$\card(C_k\cap n)\leq\frac{\card(C\cap n)}{\lceil\delta\rceil}\leq\frac{\delta\card(B\cap n)}{\lceil\delta\rceil}\leq\card(B\cap n)$$
for each $k$. Hence, $C_k\in\I$ for each $k$ and 
$$C=\{c_0,\dots,c_{\lceil\delta\rceil-1}\}\cup C_0\cup\ldots\cup C_{\lceil\delta\rceil-1}\in\I.$$
\end{proof}

Next fact establishes connection of increasing-invariance with the class of simple density ideals. Note that it was also observed in \cite{Inv} and \cite{generalized}. Actually, in Section \ref{sekcja} we show that this connection is much deeper. However, the following result will be sufficient to construct in Section \ref{antichains} an antichain of Erd\H{o}s-Ulam ideals which are not simple density ideals. 

\begin{prop}(\cite[Section 4]{Inv} and \cite[Proposition 11]{generalized})
\label{wprawo}
Every simple density ideal is increasing-invariant.
\end{prop}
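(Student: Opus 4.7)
The plan is to verify the definition of $\Z_g$ directly. Let $g\in H$ be such that $\I=\Z_g$, and fix $B\in\I$ together with $C\subseteq\omega$ satisfying $\card(C\cap n)\leq\card(B\cap n)$ for all $n$. By definition of $\Z_g$ we have $\lim_{n\to\infty}\card(B\cap n)/g(n)=0$, and eventually $g(n)>0$ since $g(n)\to\infty$. Dividing the hypothesized inequality by $g(n)$ yields
\[
0\leq\frac{\card(C\cap n)}{g(n)}\leq\frac{\card(B\cap n)}{g(n)}
\]
for all sufficiently large $n$, so the squeeze principle gives $\card(C\cap n)/g(n)\to 0$, i.e., $C\in\Z_g$.

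That is the entire argument; no serious obstacle is expected, as the conclusion is a direct consequence of the shape of the defining condition of $\Z_g$ (pointwise domination of the counting function $n\mapsto\card(\cdot\cap n)$ is preserved after dividing by the same denominator $g(n)$). It is worth noting that this ``one-line'' proof only establishes the version of increasing-invariance with constant $\delta=1$, but by Proposition \ref{delta} this already yields the apparently stronger statement allowing any $\delta>0$, which is the form in which increasing-invariance will be applied later.
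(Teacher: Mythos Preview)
Your proof is correct and is essentially identical to the paper's own argument: divide the inequality $\card(C\cap n)\leq\card(B\cap n)$ by $g(n)$ and apply the squeeze theorem. The only difference is cosmetic (you mention that $g(n)>0$ eventually, and add a closing remark about Proposition~\ref{delta}).
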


\begin{proof}
Fix $g\in H$ and suppose that $B\in\Z_g$. Then for any $C\subseteq\omega$ satisfying $\card(C\cap n)\leq\card(B\cap n)$ for all $n$, we have $\card(C\cap n)/g(n)\leq\card(B\cap n)/g(n)$ for every $n$. Hence, $C\in\Z_g$.
\end{proof}

We end this section with some remarks on increasing-invariance in case of ideals which are not simple density ideals.

Recall that an ideal is \emph{tall} if any infinite set in $\mathscr{P}(\omega)$ contains an infinite subset belonging to the ideal. Denote by $\Fin$ the ideal consisting of all finite subsets of $\omega$.

\begin{prop}
$\Fin$ is a increasing-invariant ideal. Moreover, it is the only increasing-invariant ideal among all non-tall ideals. 
\end{prop}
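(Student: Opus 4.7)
The plan is to establish the two claims separately. For the first, note that if $B \in \Fin$ and $C \subseteq \omega$ satisfies $\card(C \cap n) \leq \card(B \cap n)$ for all $n$, then passing to the limit in $n$ gives $\card(C) \leq \card(B) < \infty$, so $C \in \Fin$.

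For the second claim, the strategy is to prove the contrapositive: if an increasing-invariant ideal $\I$ contains some infinite set, then $\I$ is tall. So fix an increasing-invariant $\I$ and some infinite $B \in \I$, and let $X \subseteq \omega$ be an arbitrary infinite set; we want to produce an infinite $C \subseteq X$ with $C \in \I$. Enumerate $B = \{b_0 < b_1 < \dots\}$ and $X = \{x_0 < x_1 < \dots\}$, and recursively choose $c_i \in X$ with $c_0 < c_1 < \dots$ and $c_i \geq b_i$ (possible since $X$ is infinite). Set $C = \{c_i : i \in \omega\}$.

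The key observation is that this choice forces $\card(C \cap n) \leq \card(B \cap n)$ for every $n$: if $c_i < n$ then $b_i \leq c_i < n$, so every index $i$ counted on the left is also counted on the right. By increasing-invariance we conclude $C \in \I$, and $C$ is an infinite subset of $X$, so $\I$ is tall.

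I do not foresee a serious obstacle here; the only mildly subtle point is the dominating construction $c_i \geq b_i$, which is exactly what converts the counting inequality into containment in $\I$ via the increasing-invariance hypothesis.
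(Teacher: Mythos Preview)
Your proof is correct and takes essentially the same approach as the paper. The paper argues by direct contradiction (fix a non-tall $\I\neq\Fin$, take $A$ witnessing non-tallness, and build $C\subseteq A$ by $c_i=\min\{a\in A:a\geq b_i\}$ to violate increasing-invariance), while you phrase it as the contrapositive (an increasing-invariant $\I$ with an infinite member must be tall); the dominating construction $c_i\geq b_i$ and the counting inequality $\card(C\cap n)\leq\card(B\cap n)$ are identical in both.
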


\begin{proof}
The first part is trivial. To prove the second part fix a non-tall ideal $\I\neq\Fin$, an infinite set $B\in\I$ and $A\notin\I$ such that every its infinite subset is not in $\I$. Let $(b_i)$ be an increasing enumeration of $B$. Define $c_i=\min\{a\in A:\ a\geq b_i\}$ and $C=\{c_i:\ i\in\omega\}$. Then $\card(C\cap n)\leq\card(B\cap n)$ for all $n$. Moreover, $C\subseteq A$ is infinite, so $C\notin\I$.
\end{proof}

\begin{remark}
\label{rem}
Observe that there are tall and increasing-invariant ideals which are not simple density ideals (or even density ideals). A good example is the ideal $\I_{1/n}=\{A\subseteq\omega:\ \sum_{a\in A}1/a<\infty\}$ -- it is not a simple density ideal (it is $\mathbf{F}_{\sigma}$ by \cite[Example 1.2.3.(c)]{Farah} and simple density ideals are not $\mathbf{F}_{\sigma}$ by \cite[Corollary 3.5.]{Den}).
\end{remark}

\section{Antichains}
\label{antichains}

In \cite{generalized} it is proved that among simple density ideals there is an $\leq_{K}$-antichain of size $\ce$. Since all Erd\H{o}s-Ulam ideals are Rudin-Blass equivalent (by \cite[Theorem 1.13.10]{Farah}), we can infer that actually it is an $\leq_{K}$-antichain among simple density ideals which are not Erd\H{o}s-Ulam ideals. In this section we construct antichains among Erd\H{o}s-Ulam ideals. In this case best possible is to have an antichain in the sense of $\sqsubseteq$.

\begin{prop}
There is an $\sqsubseteq$-antichain of size $\ce$ among Erd\H{o}s-Ulam ideals which are not simple density ideals.
\end{prop}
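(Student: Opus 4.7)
The plan is to construct, for each $A$ in a carefully chosen family $\mathcal{A}\subseteq 2^\omega$ of size $\ce$, an Erd\H{o}s-Ulam ideal $\EU_{f_A}$ which fails to be increasing-invariant (hence, by Proposition \ref{wprawo}, cannot be a simple density ideal), and then to argue that the resulting family is pairwise $\sqsubseteq$-incomparable.

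For the basic construction I would partition $\omega$ into consecutive blocks $J_n=L_n\sqcup K_n$ with $|L_n|=|K_n|=s_n$ and a rapidly growing sequence $(s_n)$ (say $s_{n+1}\geq s_n^2$), place weights $f_A(i)=1$ on $L_n$ and $f_A(i)=s_n$ on $K_n$, and work with the probability measures $\mu_n$ on $J_n$ proportional to $f_A$. Then $\sup_i\mu_n(\{i\})\sim 1/s_n\to 0$ while $\mu_n(\omega)=1$, so conditions (D1)--(D3) of Theorem \ref{FarahEU} are satisfied and $\EU_{f_A}$ is Erd\H{o}s-Ulam. To witness non-increasing-invariance, take $L=\bigcup_n L_n$ and any $C=\bigcup_n C_n$ with $C_n\subseteq K_n$ and $|C_n|=|L_n|$. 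Since $L_n$ precedes $K_n$ in $J_n$, one checks $\card(C\cap m)\leq\card(L\cap m)$ for every $m$; meanwhile, weighting up to $\min J_n$ gives $L$ mass of order $\sum_{k<n}s_k$ against a total of order $\sum_{k<n}s_k^2$, so $L\in\EU_{f_A}$, while $C$ already carries mass of order $\sum_{k<n}s_k^2$, so $C\notin\EU_{f_A}$.

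To promote this single example into an $\sqsubseteq$-antichain, I would let the shape of $(s_n)$ depend on the parameter $A$: fix an almost disjoint family $\mathcal{A}\subseteq[\omega]^\omega$ of size $\ce$ and inflate $s_n$ whenever $n\in A$ to a value much larger than the ``background'' value used when $n\notin A$. Given distinct $A,A'\in\mathcal{A}$ and any bijection $\phi\colon\omega\to\omega$ purporting to witness $\EU_{f_A}\sqsubseteq\EU_{f_{A'}}$, I would choose $n\in A\setminus A'$ (infinite by almost-disjointness) where the $K_n^A$-weight $s_n^A$ dwarfs all $s_m^{A'}$ for blocks $J_m^{A'}$ meeting $\phi^{-1}[J_n^A]$, and then exhibit a set $B\in\EU_{f_A}$ concentrated on this $K_n^A$ whose preimage $\phi^{-1}[B]$ carries $\mu_m^{A'}$-mass bounded away from $0$ for some such $m$, contradicting $\phi^{-1}[B]\in\EU_{f_{A'}}$.

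The main obstacle is this last step: unlike in a Kat\v{e}tov antichain (where one can freely use arbitrary maps and large fibres), the bijectivity built into $\sqsubseteq$ couples the two block structures tightly, and the diagonalization must be coordinated so that for every $\phi$ the chosen block $n\in A\triangle A'$ actually produces the required mass mismatch in both directions. I expect this to require extra care in the choice of $\mathcal{A}$ (quite possibly replacing the almost disjoint family by a $\ce$-splitting family enumerated against all bijections), along the lines of the $\leq_K$-antichain construction in \cite{generalized}.
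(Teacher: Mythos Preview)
Your single-example construction (the weighted blocks $J_n=L_n\sqcup K_n$) is correct and gives a non-increasing-invariant Erd\H{o}s-Ulam ideal, so by Proposition \ref{wprawo} it is not a simple density ideal. The gap is exactly where you place it: promoting this to a $\sqsubseteq$-antichain by varying the growth profile $(s_n)$ along an almost disjoint family does not obviously work, and the fallback you suggest---diagonalizing against all $\ce$ bijections---would be a substantial detour with no guarantee of success (and would likely entangle you with cardinal-arithmetic issues or a transfinite construction that the statement does not warrant).

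The paper avoids this entirely by a different organizing idea. Instead of building a different block structure for each parameter, one fixes a \emph{single} sequence of probability measures $(\mu_i)$ with supports $D_i$ of size $i!$ separated by gaps of size $(i+1)!$, and then for each $M$ in an almost disjoint family $\mathcal{F}$ sets $\I_M=\Exh(\sup_{m\in M}\mu_m)$. Non-increasing-invariance now comes from the large gaps (slide each $D_m$ leftwards into the preceding gap). For the antichain, given distinct $M,K\in\mathcal{F}$ and any bijection $\phi$, pick $k_i\in K\setminus M$; since $|D_{k_i}|=k_i!$ and $\sum_{j<k_i}j!<2(k_i-1)!$, at least half of $\phi[D_{k_i}]$ must land above the union of all earlier supports. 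Collecting those halves yields a set $A$ with $\mu_m(A)\lesssim 1/m$ for every $m\in M$ (so $A\in\I_M$) while $\mu_{k_i}(\phi^{-1}[A])\geq 1/2$ (so $\phi^{-1}[A]\notin\I_K$). The point is that the factorial growth alone forces the mass mismatch for \emph{every} bijection; no diagonalization is needed. Your parameter-dependent block sizes destroy exactly this uniformity, which is why the last step resists you.
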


\begin{proof}
Let $n_0=1$ and $n_{i+1}=n_i+i!+(i+1)!$ for all $i\in\omega$. Define $\mu_i(\{j\})=1/i!$ for $j\in D_i=\{n_i,n_i+1,\ldots,n_i+i!-1\}$ and $\mu_i(\{j\})=0$ otherwise. Then each $\mu_i$ is a probability measure with finite support. Fix a family $\mathcal{F}$ of cardinality $\ce$ of infinite pairwise almost disjoint subsets of $\omega$. For each $M\in\mathcal{F}$ let $\I_M=\Exh(\sup_{m\in M}\mu_m)$. Then $\I_M$ is an Erd\H{o}s-Ulam ideal. Moreover, by Proposition \ref{wprawo} each $\I_M$ is not a simple density ideal, since it is not increasing-invariant (consider $C=\bigcup_{m\in M\setminus\{0\}}D_m\notin\I_M$ and $B=\bigcup_{m\in M\setminus\{0\}}[n_m-m!,n_m-1)\in\I_M$).

Fix $M,K\in\mathcal{F}$. Now we will show that $\I_M\not\sqsubseteq\I_K$. Let $\phi\colon\omega\to\omega$ be any bijection and $(k_i)_{i\in\omega}$ be an increasing enumeration of $K\setminus (M\cup\{0,1,\ldots,5\})$. Observe that for $n>5$ we have $n!/2>2\sum_{i=1}^{n-1} i!$. Therefore, for each $i\in\omega$ we can find a set $A_i\subseteq\phi(D_{k_i})$ of cardinality $k_i!/2$, such that $\min A_i\geq m_{k_i}-k_i!$. Notice that $A=\bigcup_{i\in\omega}A_i\in\I_M$ since for each $m\in M$ with $m\geq 5$ we have 
$$\mu_m(A)\leq\frac{\frac{1}{2}(6!+7!+\ldots+(m-1)!)}{m!}<\frac{(m-1)!}{m!}=\frac{1}{m}.$$
However, $\phi^{-1}[A]\notin\I_K$ by $\phi_{k_i}(A)=1/2$ for all $i$.
\end{proof}

\begin{prop}
There is an $\sqsubseteq$-antichain of size $\ce$ among Erd\H{o}s-Ulam simple density ideals.
\end{prop}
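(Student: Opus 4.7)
The strategy is to adapt the construction from the preceding proposition to the simple-density setting. Let $S=\{2^j:j\in\omega\}$ and fix an almost disjoint family $\mathcal{F}$ of infinite subsets of $S$ of cardinality $\ce$. For each $M\in\mathcal{F}$, define $g_M\in H$ to be constant equal to $2^k$ on each block $B_k^M=[n_k^M,n_{k+1}^M)$, where $n_0^M=0$ and $n_{k+1}^M-n_k^M=2^k$ if $k\in M$ and $=1$ otherwise. A direct calculation gives $n_k^M<2^k$, so $n/g_M(n)<2$ everywhere, which together with Proposition \ref{EU<=>bounded} confirms that $\Z_{g_M}$ is an Erd\H{o}s-Ulam simple density ideal. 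Moreover, $\Z_{g_M}$ coincides with $\Exh(\sup_k\mu_k^M)$, where $\mu_k^M(A)=|A\cap B_k^M|/2^k$, so $A\in\Z_{g_M}$ iff $|A\cap B_k^M|/2^k\to 0$ (automatic for $k\notin M$, where $|B_k^M|=1$).

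To establish the antichain property, fix distinct $M,K\in\mathcal{F}$ and any bijection $\phi\colon\omega\to\omega$; I produce $A\in\Z_{g_M}$ with $\phi^{-1}[A]\notin\Z_{g_K}$. Since $M\cap K$ is finite, $K\setminus M$ is infinite; enumerate it as $(k_i)_{i\in\omega}$. For each $i$, choose $A_i\subseteq\phi[B_{k_i}^K]\cap[2^{k_i-1},\infty)$ with $|A_i|=2^{k_i-1}$; this is possible because $\phi[B_{k_i}^K]$ has $2^{k_i}$ elements and at most $2^{k_i-1}$ of them can lie in $[0,2^{k_i-1})$. Setting $A=\bigcup_i A_i$, one has $\phi^{-1}[A]\cap B_{k_i}^K\supseteq\phi^{-1}[A_i]$ of cardinality $2^{k_i-1}$, so $\mu_{k_i}^K(\phi^{-1}[A])\geq 1/2$ for every $i$, witnessing $\phi^{-1}[A]\notin\Z_{g_K}$.

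To show $A\in\Z_{g_M}$, I estimate $|A\cap B_{k'}^M|$ for $k'\in M$. The threshold constraint $\min A_i\geq 2^{k_i-1}$ forces $A_i\cap B_{k'}^M=\emptyset$ whenever $k_i\geq k'+2$ (because then $B_{k'}^M\subseteq[0,2^{k'+1})\subseteq[0,2^{k_i-1})$). Moreover, the restriction $M,K\subseteq S$ excludes $k_i=k'$ (since $k_i\in K\setminus M$ is disjoint from $M$) and, for $k'=2^j$ with $j\geq 2$, also excludes $k_i=k'\pm 1$ (since neither $k'-1$ nor $k'+1$ is a power of $2$). Thus every contributing $k_i$ satisfies $k_i\leq k'/2$ (the largest power of $2$ strictly below $k'$), so summing the resulting geometric series gives
\[
|A\cap B_{k'}^M|\leq\sum_{k_i\in S,\,k_i\leq k'/2}2^{k_i-1}\leq 2^{k'/2},
\]
and hence $\mu_{k'}^M(A)\leq 2^{-k'/2}\to 0$ as $k'\to\infty$ through $M$.

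The main obstacle is reconciling the competing demands on $|A_i|$: it must be comparable to $|B_{k_i}^K|=2^{k_i}$ to force $\phi^{-1}[A]\notin\Z_{g_K}$, yet the cumulative contribution $\sum_i|A_i\cap B_{k'}^M|$ must be sharply smaller than $2^{k'}$ to keep $A\in\Z_{g_M}$. The sparseness of $S$ (consecutive elements separated by a factor of two) is the essential trick that makes the geometric estimate close out; it plays the role in the simple-density setting that the factorial-size gaps $[n_i+i!,n_{i+1})$ played in the preceding proposition, where $\min A_i\geq n_{k_i}-k_i!$ provided the analogous separation.
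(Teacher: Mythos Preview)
Your proof is correct and follows the paper's overall strategy: index the ideals by members of an almost disjoint family, and for a given bijection $\phi$ select roughly half of $\phi[\text{block}]$ above a threshold to build a set $A$ lying in one ideal whose $\phi$-preimage fails to lie in the other. The constructions differ in how the separation is engineered. The paper fixes a fast-growing sequence $(n_i)$ with $n_{i+1}>i\,n_i$, takes the almost disjoint family in all of $\omega$, and defines $f_L$ to equal the identity except for constant ``plateaus'' on the intervals $(n_{l_i},\,l_i n_{l_i}]$; the super-linear growth of $(n_i)$ is what makes the tail estimate $\card(A\cap n)/f_M(n)\leq 2/m_i$ work. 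You instead restrict the almost disjoint family to $S=\{2^j:j\in\omega\}$ and use pure step functions $g_M$ with block lengths $2^k$ or $1$; the sparseness of $S$ forces any contributing index $k_i$ down to at most $k'/2$, after which a crude geometric bound $\sum_{s\leq k'/2}2^{s-1}<2^{k'/2}$ finishes. Your route has the advantage of working transparently with the density-ideal representation $\Exh(\sup_k\mu_k^M)$, so the verification that $A\in\Z_{g_M}$ reduces to a blockwise estimate rather than a pointwise one; the paper's version has the minor advantage that the almost disjoint family need not be constrained to a sparse set.
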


\begin{proof}
Let $(n_i)$ be any sequence satisfying $n_{i+1}>in_i$. Fix a family $\mathcal{F}$ of cardinality $\ce$ of infinite pairwise almost disjoint subsets of $\omega$. For each $L\in\mathcal{F}$ let $(l_i)_{i\in\omega}$ be its increasing enumeration and define
$$f_L(n)=\left\{\begin{array}{ll}
l_i n_{l_i} & \textrm{if } n_{l_i}<n\leq l_i n_{l_i}\textrm{ for some }i\in\omega,\\
n & \textrm{otherwise.}
\end{array}\right.$$
It is easy to see that $\limsup_{n\to\infty}n/f_L(n)=1$ and $n/f_L(n)\leq 1$ for all $n$. Hence, $f_L\in H$ and $\Z_{f_L}$ is an Erd\H{o}s-Ulam ideal (cf. Proposition \ref{EU<=>bounded}).

Fix $K,M\in\mathcal{F}$. We will show that $\Z_{f_M}\not\sqsubseteq\Z_{f_K}$. Suppose that $\phi\colon\omega\to\omega$ is any bijection. Let $(m_i)_{i\in\omega}$ be an increasing enumeration of $M\setminus (K\cup\{0,1,2\})$. For each $i$ at least $n_{m_i}$ elements of $\phi[[n_{m_i},3n_{m_i})]$ are above $n_{m_i}-1$, so let $A_i\subseteq\phi[[n_{m_i},3n_{m_i})]\setminus n_{m_i}$ be such that $\card(A_i)=n_{m_i}$. Define $A=\bigcup_{i\in\omega}A_i$. Then we have $\card(\phi^{-1}[A]\cap 3n_{m_i})/f_K(3n_{m_i})\geq 1/3$ and 
$$\frac{\card(A\cap n)}{f_M(n)}\leq \frac{2n_{m_i}}{m_i n_{m_i}}=\frac{2}{m_i}$$
for all $n_{m_i}<n\leq n_{m_{i+1}}$. Therefore, $\phi^{-1}[A]\notin\Z_{f_K}$ and $A\in\Z_{f_M}$.
\end{proof}

\section{When an Erd\H{o}s-Ulam ideal is a simple density ideal?}
\label{sekcja}

\subsection{Some lemmas}

Simple density ideals have less complicated definition than Erd\H{o}s-Ulam ideals. Hence, it would be interesting to know in case of which Erd\H{o}s-Ulam ideals we can use this simpler form. The aim of this section is to characterize Erd\H{o}s-Ulam ideals which are simple density ideals. This is much more complicated than characterizing simple density ideals which are Erd\H{o}s-Ulam. Therefore, we will need some lemmas.

By Proposition \ref{wprawo} and Theorem \ref{simple-EU} we know that necessary conditions for an Erd\H{o}s-Ulam ideal $\I$ to be simple density ideal are that $\I$ is increasing-invariant and $\Z\subseteq\I$. Next result shows that first condition implies second. However, we do not know any proof of our characterization (cf. Theorem \ref{main}) which does not use the fact that $\Z\subseteq\I$.

\begin{lem}
\label{zawieraId}
If $\I$ is a increasing-invariant Erd\H{o}s-Ulam ideal, then $\Z\subseteq\I$.
\end{lem}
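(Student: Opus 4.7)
My plan is to argue by contradiction: assume $\I=\EU_f$ is increasing-invariant and fix $A\in\Z$; supposing further that $A\notin\I$, the goal is to construct $B\in\I$ with $|A\cap n|\leq\delta|B\cap n|$ for some $\delta>0$ and all $n$, so that Proposition~\ref{delta} (after absorbing a finite prefix into $B$) yields $A\in\I$, a contradiction.

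By Theorem~\ref{FarahEU}, write $\I=\Exh(\sup_k\mu_k)$, where $\mu_k$ is a probability measure supported on the consecutive finite interval $I_k=[n_k,n_{k+1})$ with $\mu_k(\{i\})=f(i)/W_k$, $W_k=\sum_{i\in I_k}f(i)$, and $\max_{i}\mu_k(\{i\})\to 0$ by condition~(D2). Writing $a_k=|A\cap I_k|$, density zero of $A$ gives $a_k=o(n_{k+1})$.

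The natural candidate is $B=\bigcup_k B_k$ with $B_k\subseteq I_k$ consisting of the $a_k$ atoms of smallest $\mu_k$-weight. Since those atoms have average mass at most the overall average $1/|I_k|$, one obtains $\mu_k(B_k)\leq a_k/|I_k|$; to conclude $B\in\I$, I would argue $a_k/|I_k|\to 0$ by showing that $|I_k|$ is comparable to $n_{k+1}$, which should follow from the doubling structure of Farah intervals (the total $f$-weight is made to double across successive intervals).

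The main obstacle is the counting-function comparison: the lightest atoms of $I_k$ can be positioned anywhere within $I_k$, so $|B\cap n|$ need not dominate $|A\cap n|$ at intermediate values of $n$. My plan is to enrich $B_k$ by adjoining the leftmost $a_k$ positions of $I_k$, which guarantees $|B\cap n|\geq|A\cap n|$ at every $n$ at the cost of extra $\mu_k$-mass from the left end of $I_k$; verifying that this additional mass is still asymptotically negligible, using (D2) together with the doubling structure of the Farah decomposition, is the key technical difficulty I anticipate.
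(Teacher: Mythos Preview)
Your proposal has a genuine gap, and it is precisely in the step you flag as the ``key technical difficulty''. Let $L_k=[n_k,n_k+a_k)$ be the leftmost $a_k$ positions of $I_k$ and $L=\bigcup_k L_k$. Your enriched set satisfies $B\supseteq L$. Now observe that $L$ is nothing but the left-shift of $A$ within each Farah interval, so $|A\cap n|\leq|L\cap n|$ for every $n$. By increasing-invariance (the very hypothesis you are trying to exploit), $A\notin\I$ forces $L\notin\I$; hence $\limsup_k\mu_k(L_k)>0$, and therefore $B\supseteq L$ cannot lie in $\I$. In other words, the ``additional mass from the left end'' is not merely hard to bound---it is provably \emph{not} negligible. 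No appeal to (D2) or to the doubling of $f$-weights can rescue this, because the obstruction is structural: as long as you insist on $B_k\subseteq I_k$ with $|B_j|\approx a_j$ for all $j<k$, the counting constraint inside $I_k$ forces $B_k$ to dominate $A\cap I_k$ pointwise, and after left-shifting $A$ (which increasing-invariance permits) this means $B_k\supseteq A\cap I_k$, so $\mu_k(B)\geq\mu_k(A)$.

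A secondary issue: your claim that $|I_k|$ is comparable to $n_{k+1}$ ``by the doubling structure of Farah intervals'' is not justified. Farah's construction makes the \emph{$f$-weights} $W_k$ grow geometrically, not the lengths $|I_k|$; condition (D2) alone does not close this gap. (In the paper this comparability is eventually obtained in Lemma~\ref{lem1}, but that lemma relies on the present one, so invoking it here would be circular.)

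The paper's proof avoids the trap by abandoning the Farah intervals $I_k$ as the organizing scale and working instead with the dyadic blocks $(2^j,2^{j+1}]$. The crucial move is to place the compensating elements of $B$ in the \emph{previous} dyadic block $(2^{j-1},2^j]$, not in the same block as the corresponding piece of $C$. This cross-block shift is what decouples the measure constraint from the counting constraint: density zero gives $|C\cap(2^j,2^{j+1}]|<2^j/(2k)$, and these few points can then be spread thinly over the $D_i$'s contained in (or overlapping) $(2^{j-1},2^j]$, each receiving at most a $1/k$-fraction of its support, so that $\mu_i(B_k)\leq 2/k$ uniformly in $i$.
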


\begin{proof}
By \cite[Theorem 1.13.3.(a)]{Farah} we have a sequence $(\mu_i)$ of probability measures such that $\I=\Exh(\sup_{i\in\omega}\mu_i)$. Actually, by the proof of \cite[Theorem 1.13.3.(a)]{Farah} we can even assume that $D_i=\supp(\mu_i)$ are consecutive intervals. Fix any $C\in\Z$ and suppose to the contrary that there is some $\delta>0$ such that $\mu_i(C)>\delta$ for infinitely many $i$. We will inductively construct sets $B_k$ and $C_k$ for $k\in\omega\setminus\{0\}$. Suppose that we have already defined $B_l$ and $C_l$ for $l<k$. There is $n_k$ such that:
\begin{itemize}
\item[(i)] $2^{n_k}>\max C_{k-1}$ if $k>1$;
\item[(ii)] for any $n>2^{n_k}$ and $i\in\omega$ we have $\mu_i(\{n\})<1/k$;
\item[(iii)] $\card(C\cap (2^n,2^{n+1}])/2^n<1/(2k)$ for all $n>n_k$.
\end{itemize} 
Find $i_k$ such that $\min D_{i_k}>2^{n_k+1}$ and $\mu_{i_k}(C)>\delta$. Put $C_k=C\cap D_{i_k}$ and denote $l_k=\min\{j\in\omega:\ C_k\cap (2^j,2^{j+1}]\neq\emptyset\}$ and $r_k=\max\{j\in\omega:\ C_k\cap (2^j,2^{j+1}]\neq\emptyset\}$. For each $l_k\leq j\leq r_k$ pick $\card(C_k\cap (2^{j},2^{j+1}])$ elements of $I_j=(2^{j-1},2^{j}]$ in such a way that:
\begin{itemize}
\item[(a)] at most $\card(E_j)/k$ elements of $E_j=\bigcup_{i\in T_j}D_i\cap I_j$ were picked, where $T_j=\{i\in\omega:\ D_i\cap I_j\neq\emptyset\ \wedge\ D_i\not\subseteq I_j\}$;
\item[(b)] if for some $i$ we have $D_i\subseteq I_j$, then at most $l+1$ elements of $D_i$ were picked, where $lk<\card(D_i)\leq (l+1)k$ (note that $\card(D_i)\geq k$ by condition (ii));
\item[(c)] the value of $\sup_{i\in\omega}\mu_i$ on each picked element is the least possible.
\end{itemize} 
Let $B_k$ consist of the picked elements. This ends the construction.

It is easy to see that $\card(C_k\cap n)\leq\card(B_k\cap n)$ for all $n$ and $k$. Moreover, $\mu_i(B_k)\leq 2/k$ for all $i$ and $k$. Indeed, take $l_k\leq j\leq r_k$ and note that if $D_i\subseteq (2^{j-1},2^{j}]$, then $\mu_i(B_k)\leq(l+1)/\card(D_i)\leq (l+1)/(lk)\leq 2/k$, where $l$ is as in (b). What is more, $\sum_{i\in T_j}\mu_i(I_j)\leq 2$ (recall that $\mu_i$ are probability measures and $D_i$ are consecutive intervals), so $\mu_i(B_k)\leq\frac{2}{\card(E)}\frac{\card(E)}{k}=2/k$, where $T_j$ and $E_j$ are as in (a). 

Finally, observe that $B=\bigcup_{k\in\omega\setminus\{0\}}B_k\in\I$, $C'=\bigcup_{k\in\omega\setminus\{0\}}C_k\notin\I$ and $\card(C'\cap n)\leq\card(B\cap n)$ for all $n$, which contradicts the fact that $\I$ is increasing-invariant.
\end{proof}

The next example shows that there is an Erd\H{o}s-Ulam ideal $\I$ such that $\Z\subseteq\I$ but $\I$ is not increasing-invariant.

\begin{ex}
Let $D_k=(2k!,3k!]$ for all $k\in\omega\setminus\{0\}$. Define a sequence of probability measures $(\mu_k)$ by $\mu_k(\{i\})=1/(k!)$ for all $i\in D_k$ and $\mu_k(\{i\})=0$ for all $i\notin D_k$. Let $\I$ be the Erd\H{o}s-Ulam ideal associated with $(\mu_k)$. Observe first that $\I$ is not increasing-invariant since for $C=\bigcup_{k\in\omega\setminus\{0\}}D_k\notin\I$ and $B=\bigcup_{k\in\omega\setminus\{0\}}((k!),2(k!)]\in\I$ we have $\card(C\cap n)\leq\card(B\cap n)$ for every $n$. Now we will show that $\Z\subseteq\I$. Fix any $A\notin\I$. There are $\delta>0$ and a sequence $(m_k)$ such that $\mu_{m_k}(A)>\delta$ for all $k$. It follows that $\card(A\cap D_k)>\delta (k!)$. Then we have 
$$\frac{\card(A\cap 3(k!))}{3(k!)}\geq\frac{\card(A\cap D_k)}{3(k!)}>\frac{\delta}{3},$$
hence, $A\notin\Z$.
\end{ex}

Next two lemmas show that increasing-invariance enables us to have a better control of measures generating the Erd\H{o}s-Ulam ideal.

\begin{lem}
\label{lem2}
If $\I$ is an increasing-invariant density ideal generated by a sequence of measures $(\mu_n)$, then it is equal to $\J=\{A\subseteq\omega:\ \phi^{-1}[A]\in\I\}$, where $\phi\colon\omega\to\omega$ is a bijection given by $\phi[\supp(\mu_n)]=\supp(\mu_n)$ and
$$\phi_n(i)\leq\phi_n(j)\Leftrightarrow \mu_n(\{i\})\geq\mu_n(\{j\})$$
for all $n\in\omega$ and $i,j\in\supp(\mu_n)$.
\end{lem}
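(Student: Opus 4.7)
Using the identity $\mu_n(\phi^{-1}[A])=\nu_n(A)$ (where $\nu_n$ is the decreasing rearrangement of $\mu_n$ on $D_n=\supp\mu_n$ that places the largest masses at the smallest natural indices of $D_n$), the lemma amounts to showing $\mu_n(A)\to 0\Leftrightarrow\nu_n(A)\to 0$ under the hypothesis that $\I$ is increasing-invariant. Note that $\phi$ preserves each $D_n$ setwise, so $|\phi^{-1}[A]\cap D_n|=|A\cap D_n|$ for all $n$ and $A$, and the cumulative counts $|\phi^{-1}[A]\cap m|$ and $|A\cap m|$ coincide at the ``block boundaries'' $m=\min D_{n+1}$.

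My plan is to establish the stronger fact: \emph{if $A\in\I$ and $A'\subseteq\omega$ satisfies $|A'\cap D_n|=|A\cap D_n|$ for all $n$, then $A'\in\I$.} Applied to $A'=\phi^{-1}[A]$ this gives $\I\subseteq\J$; applied to the pair $B=\phi^{-1}[A]\in\I$ (which holds when $A\in\J$) and $B'=A$ it gives $\J\subseteq\I$. The reduction is to the ``left-packed'' set $\tilde A=\bigcup_n\{d_{n,0},\ldots,d_{n,c_n-1}\}$, where $c_n=|A\cap D_n|$ and $d_{n,0}<d_{n,1}<\ldots$ enumerate $D_n$. One checks that $|A'\cap m|\leq|\tilde A\cap m|$ for every $m$ and every $A'$ with the same block cardinalities, because the leftmost packing realizes the pointwise maximum $\min(j,c_n)$ of the within-block cumulative count at position $a_n+j$. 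Once $\tilde A\in\I$ is proved, increasing-invariance yields $A'\in\I$.

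The technical heart is thus $A\in\I\Rightarrow\tilde A\in\I$, which I would prove by contradiction. Assume $\tilde A\notin\I$; choose $\delta>0$ and a subsequence $(n_k)$ with $\mu_{n_k}(\tilde A)=\sum_{j<c_{n_k}}\mu_{n_k}(\{d_{n_k,j}\})\geq\delta$. Thin $(n_k)$ further so that
\[
|A\cap\min D_{n_k}|-\sum_{j<k}c_{n_j}\ \geq\ c_{n_k}\qquad(k\in\omega).
\]
This is possible because the case of finite $A$ is trivial (then $\tilde A$ is finite, hence in $\I$), and for infinite $A$ the quantity $|A\cap\min D_n|$ grows without bound. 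Set $C=\bigcup_k\{d_{n_k,0},\ldots,d_{n_k,c_{n_k}-1}\}$. Then $\mu_{n_k}(C)=\mu_{n_k}(\tilde A\cap D_{n_k})\geq\delta$, so $C\notin\I$, whereas $|C\cap m|\leq|A\cap m|$ for every $m$: at block boundaries this follows from the sparseness condition, and inside $D_{n_k}$ the at most $c_{n_k}$ extra increments contributed by the leftmost packing of $C$ are absorbed by the slack $|A\cap\min D_{n_k}|-|C\cap\min D_{n_k}|\geq c_{n_k}$, using that $|A\cap m|$ is nondecreasing past $\min D_{n_k}$. This contradicts $A\in\I$ and the increasing-invariance of $\I$, completing the proof.

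The main obstacle is the within-block verification $|C\cap m|\leq|A\cap m|$ in the worst case where $A\cap D_{n_k}$ sits at the tail of $D_{n_k}$, so that $|A\cap m|$ grows slowly inside $D_{n_k}$ while $|C\cap m|$ jumps rapidly by leftmost packing. The sparseness condition on $(n_k)$ supplies the cumulative-count ``budget'' accumulated from earlier blocks to pre-pay $C$'s in-block growth; securing this condition requires an iterative choice of the subsequence and a careful accounting of how $|A\cap\min D_{n_k}|$ compares with $\sum_{j<k}c_{n_j}+c_{n_k}$, which is the delicate balance between cross-block budget and within-block cost underlying the lemma.
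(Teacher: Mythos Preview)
Your reduction to the left-packed set $\tilde A$ is clean, and the inequality $|A'\cap m|\le|\tilde A\cap m|$ is correct. The trouble is the step ``thin $(n_k)$ so that $|A\cap\min D_{n_k}|-\sum_{j<k}c_{n_j}\ge c_{n_k}$''. Your justification (``$|A\cap\min D_n|$ grows without bound'') is not enough: what you actually need is that $|A\cap\min D_{n}|-c_{n}$ is unbounded along the bad sequence, and this can fail. Concretely, if the supports are consecutive intervals, $A$ meets only the bad blocks, and $c_{m_k}=|A\cap D_{m_k}|=2^k$, then $|A\cap\min D_{m_k}|=\sum_{j<k}2^j=2^k-1<c_{m_k}$ for every $k$; no subsequence can satisfy your sparseness inequality, since the ``budget'' at stage $k$ is always at most $-1$. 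You have not argued that increasing-invariance rules this configuration out, and your final paragraph in fact flags exactly this tension without resolving it. So as written the argument has a genuine gap at the point you yourself identify as the main obstacle.

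The paper avoids this difficulty by \emph{not} passing through $\tilde A$ or any cross-block accounting. It exploits the specific permutation $\phi$: for $a\in A\cap D_n$, the rank of $\phi^{-1}(a)$ in the mass ordering guarantees at least $\card(D_n\setminus a)$ elements $b\in D_n$ with $\mu_n(\{b\})\le\mu_n(\{\phi^{-1}(a)\})$. Either $a$ itself is such a $b$ (then $\mu_n(\{a\})\le\mu_n(\{\phi^{-1}(a)\})$ and these $a$'s form a set in $\I$ by direct comparison with $\phi^{-1}[A]$), or one of these $b$'s lies to the left of $a$, giving a witness $b_a\le a$ with small mass. The set $B=\{b_a\}$ is then in $\I$ for measure reasons, and increasing-invariance (applied \emph{within a single block} via $b_a\le a$) finishes. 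No subsequence, no budget, no growth comparison between $c_{n_k}$ and the cumulative count is needed.

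In short: you are attempting to prove the stronger statement that membership in $\I$ depends only on the block cardinalities $(|A\cap D_n|)_n$, and the lemma does not require this. The paper's pointwise, within-block argument is both shorter and sidesteps the sparseness issue entirely. If you want to salvage your route, you would need an additional lemma to handle the case $\limsup_k\bigl(|A\cap\min D_{m_k}|-c_{m_k}\bigr)<\infty$; nothing in your write-up covers it.
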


\begin{proof}
We will show that $\J\subseteq\I$. The proof of $\I\subseteq\J$ is similar. 

Fix any $A\in\J$. Then $\mu_n(\phi^{-1}[A])$ tends to $0$. Observe that $A'=\bigcup_{n\in\omega}\{a\in A\cap\supp(\mu_n):\ \mu(\{a\})\leq\mu_n(\{\phi^{-1}(a)\})\}\in\I$. 

For each $a\in A\cap\supp(\mu_n)$, since $a=\phi(\phi^{-1}(a))$, there are at least $\card(D_n\setminus a)$ elements $b\in\supp(\mu_n)$ such that $\mu_n(\{b\})\leq\mu_n(\{\phi^{-1}(a)\})$. Therefore, if $a\notin A'$, then we can find $b_a\in\supp(\mu_n)$ such that $\mu_n(\{b_a\})\leq\mu_n(\{\phi^{-1}(a)\})$ and additionally $b_a\leq a$. 

Consider now the set $B=\{b_a:\ a\in A\setminus A'\}$. It is in $\I$ since $\phi^{-1}[A\setminus A']\in\I$ and $\mu_n(\{b_a\})\leq\mu_n(\{\phi^{-1}(a)\})$. By $b_a\leq a$ and the increasing-invariance of $\I$ we get that $A\setminus A'\in\I$. Hence, $A\in\I$.
\end{proof}

\begin{lem}
\label{lem1}
If $\I$ is an increasing-invariant Erd\H{o}s-Ulam ideal, then it is generated by a sequence $(\mu_i)$ of probability measures with $\card(\supp(\mu_{i+1}))\geq 2\card(\supp(\mu_i))$ and $\max\supp(\mu_i)<\min\supp(\mu_{i+1})$ for all $i\in\omega$. Moreover, we can assume that $\bigcup_{i\in\omega}\supp(\mu_i)=\omega$.
\end{lem}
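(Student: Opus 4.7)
My plan is to refine Farah's representation of $\I$, using the increasing-invariance hypothesis to control support sizes. By Theorem \ref{FarahEU} together with the remarks preceding it, we have $\I = \Exh(\sup_n \nu_n)$ for probability measures $\nu_n$ whose supports $D_n$ form consecutive intervals partitioning $\omega$. Condition (D2) gives $\sup_j \nu_n(\{j\}) \to 0$; since $\nu_n$ is a probability measure on $D_n$, the bound $\sup_j \nu_n(\{j\}) \geq 1/\card(D_n)$ forces $\card(D_n)\to\infty$. By Lemma \ref{lem2} we may also assume each $\nu_n$ is non-increasing on $D_n$.

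Next I coarsen the partition. Set $k_0 = 0$, $k_1 = 1$ and inductively take $k_{i+1}$ to be the smallest $k > k_i$ such that $\sum_{n=k_i}^{k-1}\card(D_n) \geq 2\sum_{n=k_{i-1}}^{k_i-1}\card(D_n)$; this is well-defined since $\sum_n\card(D_n)=\infty$. The intervals $E_i = \bigcup_{k_i\leq n<k_{i+1}} D_n$ then partition $\omega$ into consecutive intervals satisfying $\card(E_{i+1})\geq 2\card(E_i)$ and $\max E_i < \min E_{i+1}$. For the probability measure $\mu_i$ on $E_i$, I propose the non-increasing rearrangement of the combined atomic masses: list the multiset $\{\nu_n(\{j\}): k_i\leq n<k_{i+1},\, j\in D_n\}$ in non-increasing order as $v_0^{(i)}\geq v_1^{(i)}\geq\cdots$ and enumerate $E_i$ as $e_0^{(i)} < e_1^{(i)} < \cdots$, then set $\mu_i(\{e_\ell^{(i)}\}) = v_\ell^{(i)}/(k_{i+1}-k_i)$. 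Summing gives total mass $1$, so each $\mu_i$ is a probability measure with $\supp(\mu_i) = E_i$, and by construction $\mu_i$ is non-increasing on $E_i$.

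The main obstacle will be verifying $\Exh(\sup_i \mu_i) = \I$. One inclusion uses (D2) to bound the largest atom masses $v_0^{(i)}$, ensuring $\mu_i(A) \to 0$ whenever $A \in \I$. The harder inclusion is from $\mu_i(A)\to 0$ to $A\in\I$, and here increasing-invariance is crucial. The strategy is to rearrange $A$ within each $D_n$ to the leftmost positions to form $A^*$: by monotonicity of each $\nu_n$ we get $\nu_n(A^*)\geq\nu_n(A)$, while by construction $\card(A\cap m) \leq \card(A^*\cap m)$ for all $m$. Thus it suffices to show $A^*\in\I$, after which increasing-invariance yields $A\in\I$. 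To establish $A^*\in\I$, I would apply Lemma \ref{lem2} once more (now to the $\mu_i$'s) and compare the resulting leftmost rearrangement of $A^*$ in the new representation to the $\nu_n$-picture via the multiset identity $\{v_\ell^{(i)}\}_\ell = \bigsqcup_{n\in[k_i,k_{i+1})}\{\nu_n(\{j\})\}_{j\in D_n}$. The trickiest step is controlling the interaction between the rearrangement across distinct $D_n$'s inside a group and the non-increasing structure within each $D_n$; the doubling condition $\card(E_{i+1})\geq 2\card(E_i)$ is exactly what is needed to prevent mass dilution between blocks.
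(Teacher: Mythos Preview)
Your proposal has a genuine gap: you never bound the number of blocks in each group, i.e., the quantity $N_i = k_{i+1}-k_i$, and your rearrangement argument cannot replace this. The doubling condition $\card(E_{i+1})\geq 2\card(E_i)$ is automatic from the minimality in your definition of $k_{i+1}$ and says nothing about $N_i$; it does not ``prevent mass dilution'' as you claim.

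To see why the bound on $N_i$ is indispensable, take $A=\bigcup_i D_{n_i}$ where $D_{n_i}$ is one full block in each group $E_i$. Then $\nu_{n_i}(A)=1$ for every $i$, so $A\notin\I$. But in your reduction $A^*=A$ (each $A\cap D_n$ is either all of $D_n$ or empty), so the leftmost rearrangement within the $D_n$'s changes nothing, and you are still stuck having to show $\nu_n(A)\to 0$, which is false. Meanwhile your $\mu_i(A)$ can tend to $0$ if $N_i\to\infty$, since the division by $N_i$ in your definition of $\mu_i$ kills the mass. Thus without a uniform bound $N_i\leq k_0$ the two ideals need not coincide, and your Step~3 (``apply Lemma~\ref{lem2} once more and compare via the multiset identity'') cannot close this gap.

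The paper's route is more direct: it keeps each $\nu_n$ on its own support $D_n$ and simply scales by $1/N_i$ on group $i$ (no rearrangement across blocks). The crux is then a separate argument showing $N_i$ is uniformly bounded; this is exactly where increasing-invariance enters, via Lemma~\ref{zawieraId} ($\Z\subseteq\I$). If $N_i$ were unbounded one finds, inside some large group, a block $D_{j}$ with $\card(D_j)/\max E_{i-1}$ arbitrarily small, so $\bigcup D_{j}\in\Z\subseteq\I$; but each such $D_j$ has full $\nu_j$-mass, contradicting $\I=\Exh(\sup_n\nu_n)$. With $N_i\leq k_0$ both inclusions are immediate: $\mu_i(A)$ is an average of the $\nu_n(A)$'s over the group, and conversely each $\nu_n(A)\leq k_0\,\mu_i(A)$. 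Your non-increasing rearrangement of masses across $E_i$ is an unnecessary complication that actually obscures even the easy inclusion.
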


\begin{proof}
Let $(\mu'_n)$ be the sequence of probability measures generating $\I$. By the proof of \cite[Theorem 1.13.3(a)]{Farah} we can assume that $\bigcup_{i\in\omega}\supp(\mu'_i)=\omega$ and $\supp(\mu'_i)$ are consecutive intervals.

Denote $D_i=\supp(\mu'_i)$ for $i\in\omega$. Define inductively natural numbers $i_n$ and sets $E_n$ for all $n\in\omega$. Start with $i_0=0$ and $E_0=D_0$. Suppose now that $i_m$ and $E_m$ are defined for all $m\leq n$ and let $i_{n+1}=\min\{k>i_n:\ \card(D_{i_n+1}\cup\ldots\cup D_{k})\geq 2\card(E_n)\}$ and $E_{n+1}=D_{i_n+1}\cup\ldots\cup D_{i_{n+1}}$.

Denote $i_{-1}=-1$. For each $m\in\omega$ define $\mu_m(\{k\})=\mu'_i(\{k\})/(i_m-i_{m-1})$ if $k\in D_i$ for some $i_{m-1}<i\leq i_m$ and $\mu_m(\{k\})=0$ otherwise. Then $\supp(\mu_m)=E_m$ and $\mu_n$ is a probability measure. 

Now we will show that there is $k_0$ such that for any $m$ we have $i_m-i_{m-1}\leq k_0$. Suppose otherwise: there is a sequence $(m_k)$ such that $i_{m_k}-i_{m_k-1}>k$ for all $k$. Hence, for each $k>0$ there is $i_{m_k-1}<j_k\leq i_{m_k}$ such that $\card(D_{j_k})<2\card(E_{m_k-1})/k$. We have
$$\frac{\card(D_{j_k})}{\max E_{m_k-1}}<\frac{2\card(E_{m_k-1})/k}{\max E_{m_k-1}}\leq\frac{2}{k},$$
but $\bigcup_{k\in\omega}D_{j_k}\notin\I$. A contradiction with Lemma \ref{zawieraId}.

We need to show that $\I=\Exh(\sup_{n\in\omega}\mu_n)$. Fix any $A\subseteq\omega$. Obviously, if $A\in\I$, then $A\in\Exh(\sup_{n\in\omega}\mu_n)$. On the other hand, if $A\notin\I$, then $\mu'_n(A)$ is greater than some $\delta>0$ for infinitely many $n$. Hence, $\mu_n(A)>\delta/k_0$ for infinitely many $n$.
\end{proof}

The next lemma is rather technical -- it expresses an observation which we will use several times in further considerations.

\begin{lem}
\label{lem3}
Assume that $\Z_g$ is a simple density Erd\H{o}s-Ulam ideal and $(\mu_n)$ is the sequence of measures from Lemma \ref{lem1}. If $B\notin\Z_g$, then there are $\delta>0$ and a sequence $(l_j)$ such that $\frac{\card(B\cap \supp(\mu_{n_j})\cap l_j)}{g(l_j)}\geq\delta$, where $l_j\in \supp(\mu_{n_j})$. 
\end{lem}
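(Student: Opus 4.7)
The plan is to start from the simple-density failure $B \notin \Z_g$ and to localize the witness inside a single support $\supp(\mu_n)$ via a two-case decomposition iterated a controlled number of times.

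First I would fix $\delta_0 > 0$ and a sequence $l_k \to \infty$ with $\card(B \cap l_k)/g(l_k) \geq \delta_0$; such a sequence exists since $B \notin \Z_g$. Let $n_k$ be the unique index with $l_k \in \supp(\mu_{n_k})$. Because the supports are finite consecutive intervals partitioning $\omega$, $n_k \to \infty$. Writing $a_n = \min \supp(\mu_n)$, $b_n = \max \supp(\mu_n)$, and $s_n = |\supp(\mu_n)|$, the identity
$$
\card(B \cap l_k) = \card\bigl(B \cap \supp(\mu_{n_k}) \cap l_k\bigr) + \card(B \cap a_{n_k})
$$
splits the problem into two cases. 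If the first summand is at least $\delta_0 g(l_k)/2$ for infinitely many $k$, the lemma holds on that subsequence with $\delta = \delta_0/2$. Otherwise, for cofinitely many $k$ we have $\card(B \cap a_{n_k}) \geq \delta_0 g(l_k)/2$; since $a_{n_k} \leq l_k$ and $g$ is nondecreasing, this also gives $\card(B \cap a_{n_k})/g(a_{n_k}) \geq \delta_0/2$.

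In the second (bad) case I would shift to $l'_k = a_{n_k} - 1 = b_{n_k - 1} \in \supp(\mu_{n_k - 1})$. Because that case forces $a_{n_k} \geq \delta_0 g(l_k)/2 \to \infty$, the point $l'_k$ also tends to infinity and the ``$-1$'' correction becomes negligible: for large $k$, $\card(B \cap l'_k)/g(l'_k) \geq \delta_0/3$ (say). Reapplying the dichotomy to $(l'_k, n_k - 1)$ and iterating, the recursion must terminate after at most $n_k$ steps because at $\mu_0$ the identity $\supp(\mu_0) \cap l = [0, l]$ makes the ``good'' case automatic.

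The technical heart, and the step I expect to be the main obstacle, is extracting a \emph{uniform} $\delta > 0$ across infinitely many $k$. Each Case B iteration costs a multiplicative factor of about $2$ in the working ratio, so one needs a uniform bound on the iteration depth $T_k$. Here the two hypotheses enter decisively: Proposition~\ref{EU<=>bounded} gives $n/g(n) \leq C$, so in the Case B branch $a_{n_k} \geq (\delta_0/(2C)) l_k$, preventing $l$ from shrinking by more than a fixed factor per iteration; simultaneously, the doubling $s_{n+1} \geq 2 s_n$ from Lemma~\ref{lem1} together with $a_n \leq s_n$ forces the ``earlier-support'' part to thin out geometrically with depth (so that $a_{n_k - t} \leq s_{n_k}/2^t$). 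A pigeonhole argument on $T_k$, exploiting the tension between these two geometric constraints, should then produce either a subsequence on which $T_k$ is bounded by some fixed $T$ (whence $\delta = \delta_0/2^{T+1}$ works) or a direct contradiction with the Erd\H{o}s-Ulam growth constraint.
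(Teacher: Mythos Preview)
Your plan correctly identifies the two ingredients that matter --- the bound $n/g(n)\le C$ from Proposition~\ref{EU<=>bounded} and the doubling $s_{n+1}\ge 2s_n$ from Lemma~\ref{lem1} --- but the way you propose to combine them does not close. The claim that Case~B ``prevents $l$ from shrinking by more than a fixed factor per iteration'' is only true at step~$0$: at step~$t$ the relevant inequality is $a_{n_k-t}\ge(\epsilon_t/(2C))\,l^{(t)}_k$ with $\epsilon_t\approx\delta_0/2^t$, so the shrinkage bound deteriorates along the recursion. I could not extract a contradiction from $T_k\to\infty$ using your two constraints; the inequalities one obtains (e.g.\ $a_{n_k-T+1}\ge(\delta_0/(2^{T}C))\,b_{n_k-T+1}$ together with $a_{n_k-T+1}<s_{n_k-T+1}$) collapse to trivialities. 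So the ``or a direct contradiction with the Erd\H{o}s--Ulam growth constraint'' branch is not substantiated, and without it the pigeonhole on $T_k$ gives nothing.

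The paper avoids the iteration entirely. It first proves the single estimate $\card(D_{n})/g(i)<\alpha$ for all $i\in D_{n+1}$ --- which in fact follows immediately from your two ingredients, since $\card(D_{n})=s_n\le a_{n+1}\le i$ (by doubling) and $i/g(i)\le C$. With this in hand one fixes, once and for all, a depth $m$ with $\alpha/2^{m}<\delta_0$; then the doubling gives
\[
\frac{\max D_{s_j-(m+3)}}{g(i_j)}\;<\;\frac{\card(D_{s_j-1})}{2^{m+1}g(i_j)}\;\le\;\frac{\alpha}{2^{m+1}},
\]
so the entire tail below $D_{s_j-(m+2)}$ contributes less than $\delta_0/2$ to $\card(B\cap i_j)/g(i_j)$. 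A single pigeonhole over the $m+3$ remaining intervals then yields the lemma with $\delta=\alpha/(2^{m+2}(m+3))$. The point is that the depth is chosen \emph{before} looking at $k$, so no uniformity issue arises; your iterative halving, by contrast, makes the depth depend on $k$ in a way you never control.
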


\begin{proof}
Denote $D_n=\supp(\mu_n)$. Observe first that there is some $\alpha>0$ such that $\card(D_{n})/g(i)<\alpha$ for all $n\in\omega$ and $i\in D_{n+1}$. Indeed, otherwise we would have a sequence $(t_n)$ such that $\card(D_{m_n})/g(t_n)>n$ for all $n$, where $t_n\in D_{m_n+1}$. Then let $A_n=[t_n-\lfloor \card(D_{m_n})/n\rfloor,t_n)$ and define $A=\bigcup_{n\in\omega}A_n$. We have $A\in\Z$ as $\card(A_n)/\max A_n\leq\card(D_{m_n})/(n\max D_{m_n})<1/n$. However, $\card(A\cap t_n)/g(t_n)\geq \card(D_{m_n})/(ng(t_n))-1/g(t_n)>1-1/g(t_n)$ which is greater than $1/2$ if $t_n$ is sufficiently large. A contradiction with Lemma \ref{zawieraId}.

Assume now that for some $B\subseteq\omega$ we have $B\notin\Z_g$, i.e., there are $m\in\omega$ and a sequence $(i_j)$ such that $\card(B\cap i_j)/g(i_j)>\alpha/2^m$ for all $j\in\omega$. Define $\delta=\frac{\alpha}{2^{m+2} (m+3)}$. Fix $j$ and denote by $s_j$ the unique $s\in\omega$ with $i_j\in D_s$. Then either $\card(B\cap D_{s_j}\cap i_j)/g(i_j)\geq 2\delta$ or there must be $s_j-(m+2)\leq n_j<s_j$ such that $\card(B\cap D_{n_j})/g(\max D_{n_j})\geq 2\delta$. Indeed, otherwise we would have
$$\frac{\alpha}{2^m}<\frac{\card(B\cap i_j)}{g(i_j)}\leq$$
$$\frac{\card(B\cap D_{s_j}\cap i_j)}{g(i_j)}+\frac{\card(B\cap D_{s_j-1})}{g(\max D_{s_j-1})}+\ldots+\frac{\card(B\cap D_{s_j-(m+2)})}{g(\max D_{s_j-(m+2)})}+\frac{\max D_{s_j-(m+3)}}{g(i_j)}$$
which is less than $\alpha/2^m$ since $\max D_{s_j-(m+3)}/g(i_j)<\alpha/2^{m+1}$ as
$$\max D_{s_j-(m+3)}<\card(D_{s_j-(m+2)})<\frac{\card(D_{s_j-1})}{2^{m+1}}\leq\frac{\alpha g(i_j)}{2^{m+1}}.$$
Therefore, $\frac{1+\card(B\cap D_{n_j}\cap l_j)}{g(l_j)}\geq 2\delta$ for some $s_j-(m+2)\leq n_j\leq s_j$ and $l_j\in D_{n_j}$. As $\lim_{i\to\infty}g(i)=\infty$, if $l_j$ is sufficiently large, then we get $\frac{\card(B\cap D_{n_j}\cap l_j)}{g(l_j)}\geq\delta$.
\end{proof}

\subsection{Almost uniform distribution}

It occurs that the assumption that an Erd\H{o}s-Ulam ideal is increasing-invariant is not enough for it to be a simple density ideal. In this subsection we prove it and introduce another, besides increasing-invariance, necessary condition -- almost uniform distribution.

\begin{df}
\label{star}
Let $\Exh(\sup_{n\in\omega}\mu_n)$ be a density ideal. We say that a set $B\subseteq\omega$ \emph{satisfies condition $(\star)$ with respect to $\Exh(\sup_{n\in\omega}\mu_n)$} if $\{i\in B:\ \mu_n(\{i\})<\frac{1}{d_n}\}=\emptyset$ and
$$\forall_{\genfrac{}{}{0pt}{1}{m\in\omega}{m>0}}\ \exists_{n_m\in\omega}\ \forall_{n>n_m}\ \forall_{\genfrac{}{}{0pt}{1}{k\in\omega}{k>0}}\ \card\left(\left\{i\in B:\ \frac{k}{d_n}\leq\mu_n(\{i\})<\frac{k+1}{d_n}\right\}\right)\leq\frac{d_n}{mk(k+1)},$$
where $d_n$ denotes $\card(\supp(\mu_n))$. We say that $\Exh(\sup_{n\in\omega}\mu_n)$ is \emph{almost uniformly distributed} if every $B$ satisfying condition $(\star)$ with respect to $\Exh(\sup_{n\in\omega}\mu_n)$ belongs to $\Exh(\sup_{n\in\omega}\mu_n)$.
\end{df}

\begin{remark}
\label{n_m increasing}
Notice that the sequence $(n_m)$ in condition $(\star)$ can be assumed to be increasing.
\end{remark}

\begin{remark}
\label{jednostajnie}
Observe that condition $(\star)$ with respect to $\Exh(\sup_{n\in\omega}\mu_n)$ is equivalent to $\sigma_{B,n}(k)\rightrightarrows 0$ if $n\to\infty$, where 
$$\sigma_{B,n}(k)=k(k+1)\frac{\card\left(\left\{i\in B:\ \frac{k}{d_n}\leq\mu_n(\{i\})<\frac{k+1}{d_n}\right\}\right)}{d_n}$$
(as before, $d_n$ denotes $\card(\supp(\mu_n))$).
\end{remark}

Now we prove that almost uniform distribution is a necessary condition for an Erd\H{o}s-Ulam ideal to be a simple density ideal. 

\begin{lem}
\label{wprawo'}
Every simple density Erd\H{o}s-Ulam ideal is almost uniformly distributed.
\end{lem}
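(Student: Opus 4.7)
The plan is to realize $\I$ via a specific representation arising naturally from the simple density structure, rather than via the probability measures of Lemma \ref{lem1}. I would write $\I = \Z_g$ with $g \in H$; Proposition \ref{EU<=>bounded} gives $\sup_n n/g(n) < \infty$. I would then take the measures $(\mu_k)$ from the proof of that Proposition (equivalently \cite[Theorem 3.2]{Den}): $\mu_k$ assigns constant mass $1/g(n_k)$ to every point of $D_k := [n_k, n_{k+1})$, where $n_0 = 1$ and $n_{k+1} = \min\{n : g(n) \geq 2g(n_k)\}$. This sequence generates $\I$, and the computation in the proof of Proposition \ref{EU<=>bounded} yields a uniform bound $d_k/g(n_k) \leq C$ with $d_k := \card(D_k)$.

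Next I would verify condition $(\star)$ with respect to $(\mu_k)$. The crucial feature is that $\mu_k$ is constant on $D_k$, so every point of $B \cap D_k$ sits in the single bucket of index $K_k := \lfloor d_k/g(n_k)\rfloor \in \{0,1,\ldots,\lfloor C\rfloor\}$. When $K_k = 0$, the first clause of $(\star)$ forces $B \cap D_k = \emptyset$. When $K_k \geq 1$, points outside $D_k$ have $\mu_k$-mass $0$ (hence not in the bucket range), so $B_k^{K_k} = B \cap D_k$, and the bucket-count bound of $(\star)$ applied to this single bucket gives $\card(B \cap D_k) \leq d_k/(m K_k(K_k+1)) \leq d_k/(2m)$ for $k > n_m$, hence $\mu_k(B) = \card(B\cap D_k)/g(n_k) \leq C/(2m)$. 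Since $m$ was arbitrary, $\mu_k(B) \to 0$, so $B \in \I$.

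The hard part, and the reason for singling out this representation, is that Lemma \ref{lem1}'s probability-measure representation does not admit such a clean estimate: a generic $\mu_n(\{i\})$ can lie in a bucket of index up to $d_n$, and the naive bound $\mu_n(B) \leq \sum_{k \geq 1}(k+1)\card(B_n^k)/d_n \leq \sum_{k=1}^{d_n} 1/(mk)$ diverges in $d_n$ and gives no useful estimate. The simple density hypothesis is precisely what makes it possible to replace that representation by one whose measures are constant on their consecutive interval supports and whose total masses $\mu_k(\omega) = d_k/g(n_k)$ are uniformly bounded, collapsing the bucket analysis to a single bucket of bounded index.
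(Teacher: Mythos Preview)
Your argument is internally correct for the representation you chose: the simple-density measures $(\mu_k)$ from \cite[Theorem 3.2]{Den} are constant on their supports, and since $d_k\mu_k(\{i\})=d_k/g(n_k)=\mu_k(\omega)$ is bounded by (D1), Proposition~\ref{bounded} applies directly. (Your bucket computation just rederives that proposition in the uniform case.)

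The gap is that ``almost uniformly distributed'' in Definition~\ref{star} is a property of the pair $(\I,(\mu_n))$, not of $\I$ alone: condition~$(\star)$ for a set $B$ is phrased in terms of the specific point masses $\mu_n(\{i\})$. The paper's proof of Lemma~\ref{wprawo'} begins from a given representation $\Exh(\sup_n\mu_n)$ and, via Lemmas~\ref{lem1} and~\ref{lem2}, passes to the sequence of probability measures with consecutive-interval supports, doubling support sizes, and non-increasing point masses --- and it is for \emph{that} representation that the paper establishes a.u.d. Crucially, this is exactly the same representation used in the converse direction of Theorem~\ref{main}, so the two halves of the characterization fit together. Your representation is genuinely different (your $\mu_k$ are not probability measures and the supports need not double), and you have not argued that a.u.d.\ passes from your $(\mu_k)$ to the $(\mu_n)$ of Lemma~\ref{lem1}. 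You yourself flag this in your final paragraph: the probability-measure representation ``does not admit such a clean estimate''. That harder estimate is precisely what the paper's longer argument --- the careful choice of $C_j$ as the upper half of $B_j$ and the telescoping bound $\sum_{k\ge\bar k}\card(L^{s_j}_k\cap B)\le d_{s_j}/(m_j\bar k)$ --- is there to supply.

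So as a proof that \emph{some} density representation of $\I$ is a.u.d., your argument is valid and elegant; but as a replacement for Lemma~\ref{wprawo'} in the paper's logical structure it leaves a gap, because the rest of Section~\ref{sekcja} never works with the representation you selected.
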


\begin{proof}
Let $\Exh(\sup_{n\in\omega}\mu_n)$ be a simple density Erd\H{o}s-Ulam ideal and denote $D_n=\supp(\mu_n)$. Take $g\in H$ such that $\Z_g=\Exh(\sup_{n\in\omega}\mu_n)$. By Proposition \ref{wprawo} $\Z_g$ is increasing-invariant. Hence, by Lemmas \ref{lem2} and \ref{lem1} we can assume that $(D_n)$ is a partition of $\omega$ into consecutive intervals and $\mu_n(\{i-1\})\geq\mu_n(\{i\})$ for all $n\in\omega$ and $i\in D_n$, $i>\min D_n$.

Suppose to the contrary that $\Z_g$ is not almost uniformly distributed and $B'\notin\Z_g$ is the witnessing subset of $\omega$. Then by Lemma \ref{lem3} there are $\delta>0$ and a sequence $(i_j)$ such that $\card(B'\cap i_j\cap D_{s_j})/g(i_j)>\delta$ for all $j\in\omega$, where by $s_j$ we denote the unique $s$ with $i_j\in D_{s}$. Let $B=B'\cap\bigcup_{j\in\omega}D_{s_j}$. Then $B\notin\Z_g$ and $B$ satisfies condition $(\star)$ with respect to $\Exh(\sup_{n\in\omega}\mu_n)$.

Fix $j\in\omega$ and denote by $m_j$ the unique $m$ with $n_m<s_j\leq n_{m+1}$, where $n_m$ are as in Definition \ref{star} (by Remark \ref{n_m increasing} we may additionally assume that $(n_m)$ is increasing). Consider the set $B_j=B\cap D_{s_j}\cap i_j$. Define $C_j$ as the set consisting of the last $\lceil\card(B_j)/2\rceil$ elements of $B_j$. 

Denote 
$$L^n_k=\left\{i\in D_n:\ \frac{k}{\card(D_n)}\leq\mu_n(\{i\})<\frac{k+1}{\card(D_n)}\right\}$$
for all $n,k\in\omega$ and observe that
$$\card\left(\bigcup_{k\geq\bar{k}}L^{s_j}_k\cap B\right)\leq\sum_{k=\bar{k}}^{\infty}\card\left(L^{s_j}_k\cap B\right)\leq\sum_{k=\bar{k}}^{\infty}\frac{1}{m_j k(k+1)}\leq\frac{1}{m_j \bar{k}}$$
for every $\bar{k}\in\omega\setminus\{0\}$.

Let $l\in\omega$ be minimal such that $\card(B_j)\geq\card(D_{s_j})/l$ (note that $l\geq m_j$ by our observation from the previous paragraph) and $t\in\omega$ be maximal such that $t m_j\leq l-1$. The value of $\mu_{s_j}(C_j)$ will be the biggest possible if $B_j\subseteq \bigcup_{k\geq t}L^{s_j}_k$ (note that $\card(\bigcup_{k\geq t}L^{s_j}_k\cap B)\leq\card(D_{s_j})/(tm_j)$ by our observation and $\card(B_j)\leq\card(D_{s_j})/(l-1)\leq\card(D_{s_j})/(tm_j)$). Then $C_j\subseteq\bigcup_{k=t}^{2(t+1)}L^{s_j}_k$ since 
$$\card\left(\bigcup_{k>2(t+1)}L^{s_j}_k\cap B\right)\leq\frac{\card(D_{s_j})}{m_j2(t+1)}\leq\frac{\card(D_{s_j})}{2l}\leq\frac{\card(B_j)}{2}\leq\card(C_j).$$
We have
$$\mu_{s_j}(C_j)\leq \sum_{k=t}^{2(t+1)}\frac{(k+1)\card(C_j\cap L^{s_j}_k)}{\card(D_{s_j})}\leq\sum_{k=t}^{2(t+1)}\frac{(k+1)\card(B_j\cap L^{s_j}_k)}{\card(D_{s_j})}\leq$$
$$\sum_{k=t}^{2(t+1)}\frac{1}{m_j k}\leq\frac{1}{m_j}\frac{t+3}{t}\leq \frac{4}{m_j}.$$
Hence, $C=\bigcup_{j\in\omega}C_j$ belongs to $\Exh(\sup_{n\in\omega}\mu_n)=\Z_g$.

On the other hand, $\card(C\cap i_j)/g(i_j)\geq \card(B\cap i_j)/(2g(i_j))>\delta/2$. A contradiction.
\end{proof}

Next example shows that increasing-invariance is not strong enough to imply almost uniform distribution.

\begin{ex}
Denote $k_n=\min\{k\in\omega:\ \frac{1}{n}\sum_{i=1}^{k}\frac{1}{i+1}\geq \frac{1}{2}\}$. Let $(D_n)$ be a sequence of consecutive intervals such that:
\begin{itemize}
\item[(i)] $\frac{\card(D_n)}{n k_n (k_n+1)}\geq\card(D_{n-1})$ for all $n>0$;
\item[(ii)] $nk(k+1)|\card(D_n)$ for all $n\in\omega$ and $k=1,2,\ldots,k_n$.
\end{itemize}
Let also $L^n_{k}$ for $n\in\omega$ and $0\leq k\leq k_n$ be such that:
\begin{itemize}
\item $L^n_{k_n}=[\min D_{n},\min D_{n}+\frac{\card(D_n)}{n k_n (k_n+1)}-1]$, i.e., $L^n_{k_n}$ is the beginning section of $D_{n}$ of length $\frac{\card(D_n)}{n k_n (k_n+1)}$;
\item $L^n_{k}=[\min D_{n}\setminus\bigcup_{i=k}^{k_n}L^n_i,\min D_{n}\setminus\bigcup_{i=k}^{k_n}L^n_i+\frac{\card(D_n)}{n k (k+1)}-1]$, i.e., $L^n_{k}$ is the interval of length $\frac{\card(D_n)}{n k (k+1)}$ starting just after the end of $L^n_{k+1}$;
\item $L^n_0=D_n\setminus\bigcup_{k=1}^{k_n}L^n_k$.
\end{itemize}
For each $n$ define a measure $\mu_n\colon\mathscr{P}(\omega)\to [0,1]$ as follows:
\begin{itemize}
\item $\mu_n(\{i\})=\frac{k}{\card(D_n)}$ if $i\in L^n_k$ for some $k>1$;
\item $\mu_n(\{i\})=0$ for $i\notin D_n$;
\item $\mu_n$ on $L^n_0$ is uniformly distributed in such a way that $\mu_n(\omega)=1$.
\end{itemize}
Then $\Exh(\sup_{n\in\omega}\mu_n)$ is not almost uniformly distributed as witnessed by the set $\bigcup_{n\in\omega}\bigcup_{k=1}^{k_n}L^n_k$ since
$$\mu_n\left(\bigcup_{k=1}^{k_n}L^n_k\right)=\sum_{k=1}^{k_n}\frac{\card(D_n)}{nk(k+1)}\cdot\frac{k}{\card(D_n)}=\frac{1}{n}\sum_{k=1}^{k_n}\frac{1}{k+1}\geq \frac{1}{2}.$$
However, it is increasing-invariant by condition (i).
\end{ex}

We end this subsection with a proof of the fact that actually there is no implication between almost uniform distribution and increasing-invariance.

\begin{prop}
\label{bounded}
If the sequence $(\card(\supp(\mu_n))\mu_n(\{i\}))_{n,i\in\omega}$ is bounded, then the ideal $\Exh(\sup_{n\in\omega}\mu_n)$ is almost uniformly distributed.
\end{prop}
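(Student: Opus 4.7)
The plan is to use the boundedness hypothesis to cut the infinite sum over buckets down to a finite one, and then apply condition $(\star)$ bucket by bucket to push $\mu_n(B)$ below any prescribed positive threshold.

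\medskip

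Let $M$ be a constant with $d_n\mu_n(\{i\})\leq M$ for all $n,i\in\omega$, where $d_n=\card(\supp(\mu_n))$; without loss of generality $M$ is a positive integer. Fix any $B$ satisfying $(\star)$ with respect to $\Exh(\sup_{n\in\omega}\mu_n)$, and define the buckets
$$L^n_k=\left\{i\in B:\ \frac{k}{d_n}\leq\mu_n(\{i\})<\frac{k+1}{d_n}\right\}$$
for $n,k\in\omega$. The first clause of $(\star)$ says that no $i\in B\cap\supp(\mu_n)$ lands in $L^n_0$, and the boundedness of $d_n\mu_n(\{i\})$ by $M$ says that $L^n_k=\emptyset$ for all $k>M$. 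Hence
$$B\cap\supp(\mu_n)=\bigsqcup_{k=1}^{M}L^n_k$$
for every $n\in\omega$, so
$$\mu_n(B)=\sum_{k=1}^{M}\sum_{i\in L^n_k}\mu_n(\{i\})\leq\sum_{k=1}^{M}\frac{k+1}{d_n}\card(L^n_k).$$

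\medskip

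Now I would show that $\mu_n(B)\to 0$. Fix $m\in\omega$, $m>0$. By $(\star)$, there is $n_m\in\omega$ such that for all $n>n_m$ and all $k>0$ we have $\card(L^n_k)\leq d_n/(mk(k+1))$. Plugging this into the estimate above yields
$$\mu_n(B)\leq\sum_{k=1}^{M}\frac{k+1}{d_n}\cdot\frac{d_n}{mk(k+1)}=\frac{1}{m}\sum_{k=1}^{M}\frac{1}{k}\leq\frac{H_M}{m}$$
whenever $n>n_m$, where $H_M=1+\tfrac12+\ldots+\tfrac{1}{M}$ is a constant depending only on $M$. Since $m$ was arbitrary, $\limsup_{n\to\infty}\mu_n(B)=0$, which gives $B\in\Exh(\sup_{n\in\omega}\mu_n)$.

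\medskip

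There is no real obstacle here: the proof is just a two-step bookkeeping argument. The boundedness assumption is exactly what converts the potentially divergent tail $\sum_{k\geq 1}1/k$ into the finite partial sum $H_M$, and condition $(\star)$ provides the uniform (in $k$) per-bucket decay $1/(mk(k+1))$ needed to make each of the finitely many terms simultaneously small. The only mild point worth being careful about is that the first clause of $(\star)$ and the global bound $M$ together let us restrict to $1\leq k\leq M$, which is what makes the argument work at all.
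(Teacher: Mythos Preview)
Your proof is correct and follows essentially the same route as the paper's: both arguments bound $\mu_n(B)$ by $\sum_{k=1}^{M}\frac{1}{mk}$ using the per-bucket estimate from $(\star)$ together with the observation that the hypothesis $d_n\mu_n(\{i\})\leq M$ forces all buckets $L^n_k$ with $k>M$ to be empty. Your write-up is simply more explicit about the bucket decomposition, but the idea is identical.
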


\begin{proof}
As before, denote $\card(\supp(\mu_n))$ by $d_n$ and suppose that $B\subseteq\omega$ satisfies condition $(\star)$ with respect to $\Exh(\sup_{n\in\omega}\mu_n)$. Then for every $m\in\omega\setminus\{0\}$ and $n>n_m$ (where $n_m$ are as in Definition \ref{star}) we have $\mu_n(B)\leq\sum_{k=1}^{\infty}\frac{d_n}{mk(k+1)}\frac{k+1}{d_n}=\sum_{k=1}^{\infty}\frac{1}{mk}$. Let $M\in\omega$ be such that $d_n\mu_n(\{i\})\leq M$ for all $n,i\in\omega$ and observe that actually $\mu_n(B)\leq\sum_{k=1}^{M}\frac{1}{mk}$. Therefore, $\lim_{n\to\infty}\mu_n(B)=0$ and $B\in \Exh(\sup_{n\in\omega}\mu_n)$.
\end{proof}

\begin{ex}
Let $(D_n)$ be a sequence of consecutive intervals such that $\card(D_n)=2^{n+1}$ for all $n\in\omega$. Define $\mu_n(\{i\})=1/2^n$ for $\max D_n-2^n<i\leq\max D_n$ and $\mu_n(\{i\})=0$ otherwise. Then $\Exh(\sup_{n\in\omega}\mu_n)$ is not increasing-invariant. However, it is almost uniformly distributed as $\card(\supp(\mu_n))\mu_n(\{i\})\leq 2$ for all $n,i\in\omega$ (cf. Proposition \ref{bounded}). 
\end{ex}

\subsection{Main result}

Finally, we are ready to characterize Erd\H{o}s-Ulam ideals which simultaneously are simple density ideals.

\begin{thm}
\label{main}
An Erd\H{o}s-Ulam ideal is a simple density ideal if and only if it is increasing-invariant and almost uniformly distributed.
\end{thm}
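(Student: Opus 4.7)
The ``only if'' direction is immediate: Proposition~\ref{wprawo} gives increasing-invariance of any simple density ideal, and Lemma~\ref{wprawo'} gives almost uniform distribution of any simple density Erd\H{o}s-Ulam ideal. For the ``if'' direction I start from an increasing-invariant, almost uniformly distributed Erd\H{o}s-Ulam ideal $\I$ and plan to construct explicitly a function $g\in H$ with $\Z_g=\I$. By Lemmas~\ref{lem1} and \ref{lem2} the ideal $\I$ can be put in a canonical form $\I=\Exh(\sup_k \mu_k)$ in which the supports $D_k:=\supp(\mu_k)$ partition $\omega$ into consecutive intervals with $d_k:=\card(D_k)\geq 2d_{k-1}$, each $\mu_k$ is a probability measure, and $\mu_k$ is nonincreasing along $D_k$; Lemma~\ref{zawieraId} then yields $\Z\subseteq\I$.

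For $n=\min D_k+j\in D_k$ let $\phi_k(j):=\mu_k([\min D_k,\min D_k+j))$, and define $g(n)$ to be the running maximum, over $m\leq n$, of the local formula $(j+1)/\phi_k(j+1)$; concavity of $\phi_k$ (from nonincreasingness of $\mu_k$) makes the local formula already nondecreasing on each block, while the running maximum absorbs any defect at block boundaries. Then $g(\max D_k)\geq d_k\to\infty$, and $\Z\subseteq\I$ combined with Proposition~\ref{EU<=>bounded} applied to any $\Z_g$ representing $\I$ forces $n/g(n)$ to be bounded, so after taking integer parts $g\in H$.

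To see $\I\subseteq\Z_g$, fix $A\in\I$ and $n=\min D_k+j$. Nonincreasingness of $\mu_k$ implies that any $a$ elements of $A\cap[\min D_k,n)$ carry $\mu_k$-mass at least $a\phi_k(j+1)/(j+1)$, so $\card(A\cap[\min D_k,n))/g(n)\leq \mu_k(A)\to 0$. The past-blocks contribution $\card(A\cap D_{<k})$ I split at an index $k-L$: the portion in $\bigcup_{l\leq k-L}D_l$ has size at most $2d_{k-L}$, which is negligible compared to $g(n)\geq d_k$ by the geometric growth $d_k/d_{k-L}\geq 2^L$; the portion with $l\in(k-L,k]$ is controlled block by block using $\mu_l(A)\to 0$ together with the same nonincreasingness estimate. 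Letting first $L$ and then $k$ be large gives $\card(A\cap n)/g(n)\to 0$.

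The reverse inclusion $\Z_g\subseteq\I$, argued contrapositively, is where almost uniform distribution enters and is the main obstacle. If $A\notin\I$, then almost uniform distribution forces $A$ to fail condition~$(\star)$ with respect to $\sup_k\mu_k$, so there exist $m>0$ and infinitely many $k$ for which some level set $L^k_t=\{i\in D_k:\mu_k(\{i\})\in[t/d_k,(t+1)/d_k)\}$ violates the $(\star)$-bound, giving $\card(A\cap L^k_t)>d_k/(mt(t+1))$ for some $t=t_k\geq 1$. By nonincreasingness $L^k_t$ is a subinterval of $D_k$, and taking $n$ at its right endpoint produces a cardinality lower bound $\card(A\cap n)\gtrsim d_k/(mt(t+1))$; at the same time, using the weights on $L^k_t$ one estimates $\phi_k(j+1)$ at that $n$ from below by a constant multiple of $(j+1)t/d_k$, which yields $g(n)\lesssim t$. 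Combining the two estimates gives a lower bound on $\card(A\cap n)/g(n)$ depending only on $m$, so $A\notin\Z_g$. The technical crux is this final coupling, relating the level-set overflow granted by the failure of $(\star)$ with the detailed shape of $g$ on each block, which must be carried out uniformly in the position of $L^k_t$ inside $D_k$ and in the value of $t_k$.
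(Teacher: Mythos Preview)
Your setup (canonical form via Lemmas~\ref{lem1} and \ref{lem2}, then build $g$ block by block) matches the paper's strategy, but both inclusions in your argument contain genuine gaps.

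\textbf{The inclusion $\I\subseteq\Z_g$.} Your key inequality ``any $a$ elements of $A\cap[\min D_k,n)$ carry $\mu_k$-mass at least $a\,\phi_k(j+1)/(j+1)$'' points the wrong way. Since $\mu_k$ is nonincreasing, $\phi_k$ is concave with $\phi_k(0)=0$, hence $\phi_k(j+1)-\phi_k(j+1-a)\le a\,\phi_k(j+1)/(j+1)$: the last $a$ positions of $[0,j]$ have \emph{below}-average mass, so an arbitrary $a$-element subset of $A$ can have $\mu_k$-mass strictly smaller than $a\,\phi_k(j+1)/(j+1)$. Thus $\card(A\cap[\min D_k,n))/g(n)\le\mu_k(A)$ does not follow. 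The paper avoids this by arguing the contrapositive $B\notin\Z_g\Rightarrow B\notin\I$, using Lemma~\ref{lem3} and then \emph{moving} the offending part of $B$ via increasing-invariance to a set whose $\mu$-mass is visibly large; the delicate block-boundary construction (the sets $R_n,L_{n+1}$ and the quantity $r_n$) is precisely what makes this work.

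\textbf{The inclusion $\Z_g\subseteq\I$.} Two problems. First, failure of condition~$(\star)$ can occur solely through the clause $\{i\in B:\mu_n(\{i\})<1/d_n\}\neq\emptyset$, with no level-set overflow at all; you must split off the small-weight part of $A$ and handle it separately (the paper does this via the sets $R^n_k$ and the harmonic estimate $\sum_{k=1}^m 1/(mk)$). Second, and more seriously, your final estimate does not give a bound depending only on $m$. From $\phi_k(j+1)\ge (j+1)t/d_k$ one gets $g(n)\le d_k/t$ (not $g(n)\lesssim t$), and combining with $\card(A\cap L^k_t)>d_k/(mt(t+1))$ yields only
\[
\frac{\card(A\cap n)}{g(n)}\ \gtrsim\ \frac{d_k/(mt(t+1))}{d_k/t}\ =\ \frac{1}{m(t+1)},
\]
which tends to $0$ along any sequence with $t_k\to\infty$. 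The paper's argument for this inclusion instead shows that if $\card(B\cap D_n\cap i)/g(i)<1/m$ for all $i\in D_n$, then the maximal possible distribution of $B$ over the level sets $L^n_k$ is exactly the one appearing in condition~$(\star)$, so almost uniform distribution handles the $L$-part, while the $R$-part contributes at most $\sum_{k=1}^m 1/(mk)\to 0$.
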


\begin{proof}
The implication from left to right is given by Proposition \ref{wprawo} and Lemma \ref{wprawo'}. We will prove the converse implication.

Let $(\mu_n)$ be the sequence of probability measures generating an Erd\H{o}s-Ulam ideal $\I$ and denote $D_n=\supp(\mu_n)$. By Lemmas \ref{lem2} and \ref{lem1} we can assume that $(D_n)$ is a partition of $\omega$ into consecutive intervals and $\mu_n(\{i-1\})\geq\mu_n(\{i\})$ for all $n\in\omega$ and $i\in D_n$, $i>\min D_n$.

For all $n,k\in\omega$, $k\neq 0$, define:
$$L^n_k=\left\{i\in D_n:\ \frac{k}{\card(D_n)}\leq\mu_n(\{i\})<\frac{k+1}{\card(D_n)}\right\};$$
$$R^n_k=\left\{i\in D_n:\ \frac{1}{(k+1)\card(D_n)}\leq\mu_n(\{i\})<\frac{1}{k\card(D_n)}\right\}.$$
Observe that $D_n=\bigcup_{k\in\omega\setminus\{0\}}(L^n_k\cup R^n_k)$ for each $n$.

Define $h\colon\omega\to\omega$ by
$$h(i)=\left\{\begin{array}{ll}
\frac{\card(D_n)}{k} & \textrm{if }i\in L^n_k\textrm{ for some }n\in\omega\textrm{ and }k\in\omega\setminus\{0\},\\
(k+1)\card(D_n) & \textrm{if }i\in R^n_k\textrm{ for some }n\in\omega\textrm{ and }k\in\omega\setminus\{0\}.
\end{array}\right.$$
Notice that $h$ does not have to be nondecreasing although it is nondecreasing on each $D_n$. 

Put $L_0=\emptyset$ and for each $n\in\omega$ let $R_n\subseteq D_n$ and $L_{n+1}\subseteq D_{n+1}$ be such that:
\begin{itemize}
\item[(a)] $R_n$ is the final part of $D_n$, i.e., is equal to $[j,\max D_n]$ for some $j\in D_n$;
\item[(b)] $L_{n+1}$ is the initial part of $D_{n+1}$, i.e., is equal to $[\min D_{n+1},j]$ for some $j\in D_{n+1}$;
\item[(c)] $\card(R_n)\leq\card(D_n)/2$ and $\mu_{n+1}(L_{n+1}\setminus\{\max L_{n+1}\})<1/2$;
\item[(d)] $R_n\cap L_n=\emptyset$;
\item[(e)] cardinalities of $R_n$ and $L_{n+1}$ are least possible such that:
\begin{itemize}
\item $\card(R_n)=\card(L_{n+1})$ and $h(\min R_n)\leq h(\max L_{n+1})$ if this is consistent with the above conditions;
\item if it is not, then the one of the sets $R_n$ and $L_{n+1}$ with the least cardinality is maximal possible satisfying condition (c), and the second one is minimal such that $\card(D_n)\leq h(\max L_{n+1})$ (if $\card(R_n)<\card(L_{n+1})$) or $h(\min R_n)\leq h(\max L_{n+1})$ (if $\card(R_n)>\card(L_{n+1})$).
\end{itemize}
\end{itemize}
Observe that this construction is possible. Indeed, we have $R_n\cap L_n=\emptyset$ (since $\mu_n([\min(D_n),\min(D_n)+\lceil\card(D_n)/2\rceil])\geq 1/2$). Moreover, $R_n\subseteq L^n_1\cup\bigcup_{k>0}R^n_k$ (since $1\geq\mu_n(\bigcup_{k>1}L^n_k)\geq 2\card(\bigcup_{k>1}L^n_k)/\card(D_n)$) and $L_n\subseteq R^n_1\cup\bigcup_{k>0}L^n_k$ (since $\mu_n(\bigcup_{k>1}R^n_k)\leq \card(\bigcup_{k>1}R^n_k)/(2\card(D_n))\leq 1/2$). It remains to observe that $h(\min (L^n_1\cup\bigcup_{k>0}R^n_k))\leq\card(D_n)\leq\card(D_{n+1})/2\leq h(\max R^n_1\cup\bigcup_{k>0}L^n_k)$.

Let $r_n\in\omega$ be such that $(r_n+1)\card(D_n)\leq h(\max L_{n+1})$ and $(r_n+2)\card(D_n)>h(\max L_{n+1})$. Denote also $L=\bigcup_{n\in\omega}L_n$ and $R=\bigcup_{n\in\omega}R_n$. Define $g\colon\omega\to[0,\infty)$ by 
$$g(i)=\left\{\begin{array}{ll}
h(i) & \textrm{if }i\in D_n\setminus (L_n\cup R_n)\textrm{ for some }n\in\omega,\\
h(\max L_{n}) & \textrm{if }i\in L_n\textrm{ for some }n\in\omega,\\
(r_n+1)\card(D_n) & \textrm{if }i\in R_n\textrm{ for some }n\in\omega.
\end{array}\right.$$
Obviously, $g$ is nondecreasing and $\lim_{i\to\infty}g(i)=\infty$. We will show that $\I=\Z_g$. It will imply that $i/g(i)$ does not converge to $0$ as the latter is equivalent to $\omega\notin\Z_g$ (cf. the definition of simple density ideals) and we assumed that $\omega\notin\I$.

($\subseteq$): Suppose first that for some $B\subseteq\omega$ we have $B\notin\Z_g$. By Lemma \ref{lem3} we can assume that there are $\delta>0$ and a sequence $(l_j)$ such that $\frac{\card(B\cap D_{n_j}\cap l_j)}{g(l_j)}\geq\delta$, where $l_j\in D_{n_j}$. 

Consider first the case $B\subseteq\omega\setminus R$. Fix $j\in\omega$. By the definition of $g$, we have $\mu_{n_j}(B)\geq\frac{\card(B\cap D_{n_j}\cap l_j)}{g(l_j)}\geq\delta$. Hence, $B\notin\I$.

Suppose now that $B\subseteq R$. Then we can assume that $l_j\in R_{n_j}$. Let $C_j\subseteq L_{n_j+1}$ be any set of cardinality $\min(\card(B\cap R_{n_j}\cap l_j),\card(L_{n_j+1}))$ and define $C=\bigcup_{j\in\omega}C_j$. 

If $\card(C_j)<\card(B\cap R_{n_j}\cap l_j)$, then $\mu_{n_j+1}(C)>1/2$. On the other hand, if $\card(C_j)=\card(B\cap R_{n_j}\cap l_j)$, then we have $\frac{\card(B\cap R_{n_j}\cap l_j)}{g(l_j)}=\frac{\card(C_j)}{(r_{n_j}+1)\card(D_{n_j})}>\delta$ and $(r_{n_j}+2)\card(D_{n_j})>h(\max L_{n_j+1})$. Hence, 
$$\mu_{n_j+1}(C)\geq\frac{\card(C_j)}{h(\max L_{n_j+1})}>\frac{\delta (r_{n_j}+1)\card(D_{n_j})}{h(\max L_{n_j+1})}>\frac{\delta}{2}.$$ 
Therefore, $B\notin\I$ since $\I$ is increasing-invariant.

In the general case (i.e., if $B$ intersects both $R$ and $\omega\setminus R$), by the above considerations, either $B\cap R\notin\I$ (if $B\cap R\notin\Z_g$) or $B\cap (\omega\setminus R)\notin\I$ (if $B\cap (\omega\setminus R)\notin\Z_g$). In both cases we get that $B\notin\I$.

($\supseteq$): Suppose now that $B\in\Z_g$. First we deal with the case $B\subseteq\omega\setminus L$. Assume that $\card(B\cap D_n\cap i)/g(i)<1/m$ for some $n,m\in\omega$ and all $i\in D_n$. 

Observe first that at most $\card(D_n)/m$ elements of $\bigcup_{k\geq 1}L^n_k$ are in $B$. Simultaneously, at most $\card(D_n)/(2m)$ elements of $\bigcup_{k\geq 2}L^n_k$ are in $B$. Hence, $\mu_n(B)$ will be the biggest possible if $\card(B\cap L^n_1)\leq\frac{\card(D_n)}{2m}$. Similarly, we can show that $\mu_n(B)$ will be the biggest possible if for each $k\geq 1$ we will have $\card(B\cap L^n_k)\leq\frac{\card(D_n)}{m}(\frac{1}{k}-\frac{1}{k+1})=\frac{\card(D_n)}{mk(k+1)}$. Therefore, $B\cap \bigcup_{k\geq 1}L^n_k$ belongs to $\I$ since $\I$ is almost uniformly distributed.

We will show that $\mu_n(B\cap \bigcup_{k\geq 1}R^n_k)\leq\sum_{k=1}^m \frac{1}{mk}$. It will end this case as $\lim_{m\to\infty}\sum_{i=1}^m \frac{1}{mi}=0$. Note that $\mu_n(B)$ will be the biggest possible if $\card(B\cap R^n_k)\leq\frac{\card(D_n)}{m}$ for all $k\geq 1$ (by the same reasons as above). Therefore, we have $\mu_n(B\cap R^n_k)\leq\frac{\card(D_n)}{m}\frac{1}{k\card(D_n)}=\frac{1}{mk}$. Hence, $\mu_n(B\cap \bigcup_{k\geq 1}R^n_k)\leq\sum_{k=1}^m \frac{1}{mk}$.

Assume now that $B\subseteq L$. Then $\lim_{n\to\infty}\frac{\card(B\cap D_{n}\cap \max L_{n})}{h(\max L_{n})}=0$ (we restrict our considerations only to $n$'s such that $L_n\neq\emptyset$). Let $C_{n-1}\subseteq R_{n-1}$ be any set of cardinality $\min(\card(B\cap D_{n}\cap i_n),\card(R_{n-1}))$. 

Recall that $(r_{n-1}+2)\card(D_{n-1})>h(\max L_n)$ for all $n>1$. We have 
$$\mu_{n-1}(C_{n-1})\leq\frac{\card(C_{n-1})}{(r_{n-1}+1)\card(D_{n-1})}<2\frac{\card(B\cap D_{n}\cap i_n)}{h(\max L_n)}.$$ 
Hence, $\lim_{n\to\infty}\mu_{n}(C)=0$ and $C\in\I$. 

Note that $\card(L_n)\leq\card(D_{n-1})+1$ for each $n$. Indeed, it follows from $1\geq\mu_n(L_n\setminus\max L_n)\geq(\card(L_n)-1)\mu_n(\{\max L_n-1\})\geq (\card(L_n)-1)/h(\max L_n-1)$ and $h(\max L_n-1)<\card(D_{n-1})\leq h(\max L_n)$. Recall also that in the case of $\card(C_{n-1})<\card(B\cap D_{n}\cap i_n)$ we have $\card(R_{n-1})=\lfloor\card(D_{n-1})/2\rfloor$. Therefore, $\card(B\cap D_{n}\cap i_n)\leq 2\card(C_{n-1})+3$ for all $n$. It suffices to observe that increasing-invariance of $\I$ implies $B\in\I$ (cf. Proposition \ref{delta}). This ends the entire proof.
\end{proof}

\section{Open problems}

We end our paper with four open problems. 

Firstly, recall that every Erd\H{o}s-Ulam ideal is $\leq_{RB}$-equivalent to $\Z$ (by \cite[Theorem 1.13.10]{Farah}) and a simple density ideal is an Erd\H{o}s-Ulam ideal if and only if it is $\leq_{RB}$-equivalent to $\Z$ (by our Corollary \ref{cor}). Moreover, by Remark \ref{EU:RB<->K} in the latter result we can replace $\leq_{RB}$ by $\leq_K$. Therefore, it is natural to ask the following.

\begin{problem}
Does $\leq_{RB}$-equivalence of $\I$ and $\Z$ imply that $\I$ is an Erd\H{o}s-Ulam ideal? If yes, can $\leq_{RB}$ be replaced by $\leq_K$?
\end{problem}

Secondly, we wonder which Erd\H{o}s-Ulam ideals are isomorphic to some simple density ideal. It is easy to see that almost uniform distribution is a necessary condition since it is preserved by isomorphisms. However, the next example shows that increasing-invariance is not preserved by isomorphisms (even in case of Erd\H{o}s-Ulam ideals). 

\begin{ex}
\label{rsi-iso}
Let $(D_n)$ be a sequence of consecutive intervals such that $\card(D_n)=2n!$ for all $n\in\omega$. Define
$$\mu_n(\{i\})=\left\{\begin{array}{ll}
1/n! & \textrm{if } \min D_n\leq i<\min D_n+n!,\\
0 & \textrm{otherwise;}
\end{array}\right.$$
$$\nu_n(\{i\})=\left\{\begin{array}{ll}
1/n! & \textrm{if } \max D_n-n!<i\leq\max D_n,\\
0 & \textrm{otherwise.}
\end{array}\right.$$
Then $\Exh(\sup_{n\in\omega}\mu_n)$ is increasing-invariant, $\Exh(\sup_{n\in\omega}\nu_n)$ is not increasing-invariant, both are Erd\H{o}s-Ulam ideals and they are isomorphic. 
\end{ex}

In this context it is natural to ask whether every Erd\H{o}s-Ulam ideal has an increasing-invariant isomorphic copy. The next example shows that this is not true even if we assume almost uniform distribution.

\begin{ex}
Let $(D_n)$ be a sequence of consecutive intervals such that $\card(D_n)=k$ for all $n\in(k(k-1)/2,k(k+1)/2]$ (we put $D_0=\emptyset$). Define $\mu_n(\{i\})=1/\card(D_n)$ for all $i\in D_n$ and $\mu_n(\{i\})=0$ otherwise. Fix any bijection $\phi\colon\omega\to\omega$ and denote $\I=\Exh(\sup_{n\in\omega}\mu_n)$ and $\J=\{\phi[A]:\ A\in\I\}$. Notice that $\I$ is almost uniformly distributed by Proposition \ref{bounded}.

Let $b_n=\min \phi[D_n]$ for all $n\in\omega$. Then $B=\{b_n:\ n\in\omega\}$ belongs to $\J$. Define also $l_k\in(k(k-1)/2,k(k+1)/2]$ by $b_{l_k}\geq b_n$ for all $n\in(k(k-1)/2,k(k+1)/2]$. Consider the set $C=\bigcup_{k\in\omega}\phi[D_{l_k}]\notin\J$. There is a bijection $\sigma\colon C\to B$ such that $\sigma(i)\leq i$ for all $i\in C$ (it suffices to assure that if $i\in\phi[D_{l_k}]$ then $\phi(i)=b_n$ for some $n\in(k(k-1)/2,k(k+1)/2]$). Then we have $\card(C\cap n)=\card(\sigma[C\cap n]\cap n)\leq\card(B\cap n)$. Hence, $\J$ is not increasing-invariant.
\end{ex}

Therefore, the question about Erd\H{o}s-Ulam ideals which are isomorphic to some simple density ideal actually reduces to the following one.

\begin{problem}
Which Erd\H{o}s-Ulam ideals are isomorphic to some increasing-invariant ideals?
\end{problem}

We also want to ask about characterization of density ideals which are simple density ideals. One small step towards answering this question has recently been made in \cite[Proposition 10]{generalized}, where it is shown that every density ideal satisfying some strong technical assumptions (for instance, all measures generating it are uniformly distributed) is a simple density ideal. We do not know if our characterization from the previous Theorem \ref{main} works in the case of all tall density ideals (it is easy to see that it does not work for non-tall ideals: the ideal $\Fin$ is increasing-invariant and almost uniformly distributed, however it cannot be a simple density ideal as it is $\mathbf{F}_{\sigma}$ -- cf. Remark \ref{rem}).

\begin{problem}
Is every tall, increasing-invariant and almost uniformly distributed density ideal a simple density ideal?
\end{problem}

The last open problem concerns the following notion proposed recently in \cite{BoseDas}. A function $f\colon\mathbb{R}_+\cup\{0\}\to\mathbb{R}_+\cup\{0\}$ is called \emph{modulus function} (cf. \cite{1}) provided that:
\begin{itemize}
	\item $f(x)=0$ if and only if $x=0$;
	\item $f(x+y)\leq f(x)+f(y)$ for all $x,y\in\mathbb{R}_+\cup\{0\}$;
	\item $f$ is increasing and right-continuous at $0$.
\end{itemize}
Now, for each $g\in H$ and an unbounded modulus function $f$ the family $\Z_g(f)=\{A\subseteq\omega:\ \lim_{n\to\infty}\frac{f(\card(A\cap n))}{f(g(n))}=0\}$ is an ideal. By \cite[Theorem 3.7]{BoseDas}, every ideal of the form $\Z_g(f)$ is a density ideal. Obviously, every simple density ideal is of the form $\Z_g(f)$ as witnessed by the identity modulus function. What is more, $\Z_{\id}(\log(x+1))$ is an example of an ideal of the form $\Z_g(f)$ which is not a simple density ideal (cf. \cite[Example 2.3]{BoseDas}). 

Ideals of the form $\Z_{g}(f)$ were introduced after obtaining main results of this paper. Thus, our considerations do not include them. The ideal $\Z_{\id}(\log(x+1))$ does not seem to be an Erd\H{o}s-Ulam ideal. Therefore, we propose the following open problem. A negative answer would mean that our characterization from Theorem \ref{main} works also in the case of ideals of the form $\Z_{g}(f)$.

\begin{problem}
Is there an Erd\H{o}s-Ulam ideal of the form $\Z_{g}(f)$ that is not a simple density ideal?
\end{problem}


\begin{thebibliography}{abc}

\bibitem{1}
A. Aizpuru, M.C. List\' an-Garc\' ia, F. Rambla-Barreno, 
Density by moduli and statistical convergence, 
{\it Quaest. Mathematicae}, 37 (2014), 525--530.

\bibitem{Den} 
M. Balcerzak, P. Das, M. Filipczak, J. Swaczyna, 
Generalized kinds of density and the associated ideals, 
{\it Acta Math. Hungar.}, 147 (2015), 97--115.

\bibitem{Inv} 
M. Balcerzak, S. G\l \c{a}b, J. Swaczyna, 
Ideal invariant injections, 
{\it J. Math. Anal. App.}, 445 (2017), 423--442.

\bibitem{BoseDas}
K. Bose, P. Das, A. Kwela,
Restricted kinds of density using modulus functions and associated ideals, 
submitted.

\bibitem{4}
P. Das, E. Savas, 
On certain generalized matrix methods of convergence in $(\ell)$-groups, 
{\it Math. Slovaca}, 67 (2017), 929--938.

\bibitem{5}
P. Das, E. Savas, 
On Generalized Statistical and Ideal Convergence of Metric-Valued Sequences, 
{\it Ukr. Math. J.}, 68 (2017), 1849--1859.

\bibitem{Farah} 
I.~Farah, Analytic quotients. Theory of lifting for quotients over analytic ideals on integers, 
{\it Mem. Amer. Math. Soc.}, 148 (2000).

\bibitem{Fast} 
H. Fast, Sur la convergence statistique, {\it Colloq. Math.}, 2 (1951), 41--44.

\bibitem{Fridy} 
J. A. Fridy, On statistical convergence, {\it Analysis}, 5 (1985), 301--313.

\bibitem{Just} 
W. Just, A Krawczyk,
On Certain Boolean Algebras $\mathscr{P}(\omega)/\I$,
{\it Trans. Amer. Math. Soc.}, 285 (1984), 411--429.

\bibitem{WR} 
A. Kwela,
A note on a new ideal, 
{\it J. Math. Anal. Appl.}, 430 (2015), 932--949.

\bibitem{generalized}
A. Kwela, M. Pop{\l}awski, J. Swaczyna, J. Tryba, 
Properties of simple density ideals, 
submitted.

\bibitem{zReclawem} 
A. Kwela, I. Rec{\l}aw,
Ranks of $\mathcal{F}$-limits of filter sequences, 
{\it J. Math. Anal. Appl.}, 398 (2013), 872--878.

\bibitem{zMarcinem} 
A. Kwela, M. Staniszewski,
Ideal equal Baire classes, 
{\it J. Math. Anal. Appl.}, 451 (2017), 1133--1153.

\bibitem{zJackiem} 
A. Kwela, J. Tryba, 
Homogeneous ideals on countable sets, 
{\it Acta Math. Hungar.}, 151 (2017), 139--161.

\bibitem{3}
M.C. List\' an-Garc\' ia, 
f-statistical convergence, completeness and f-cluster points, 
{\it Bull. Belg. Math. Soc. Simon Stevin}, 23 (2016), 235--245.

\bibitem{Salat}
T. \v{S}al\'{a}t, 
On statistically convergent sequences of real numbers, 
{\it Math. Slovaca}, 30 (1980), 139--150.

\bibitem{6}
E. Savas,
On I-lacunary statistical convergence of weight g of fuzzy numbers, 
{\it J. Intell. Fuzzy Syst.}, 32 (2017), 1111--1117.

\bibitem{Solecki}
S. Solecki, 
Analytic ideals, 
{\it Bull. Symbolic Logic}, 2 (1996), 339--348.

\bibitem{Steinhaus}
H. Steinhaus, 
Sur la convergence ordinaire et la convergence asymptotique, 
{\it Colloq. Math.}, 2 (1951), 73--74.

\end{thebibliography}
\end{document}